\def\th@plain{%
  \upshape 
}
\renewenvironment{proof}[1][\proofname]{\par
  \pushQED{\qed}%
  \normalfont \topsep6\p@\@plus6\p@\relax
  \trivlist
  \item[\hskip\labelsep
        \bfseries
    #1\@addpunct{.}]\ignorespaces
}{%
  \popQED\endtrivlist\@endpefalse
}
\numberwithin{equation}{section}
\newtheorem{thm}{Theorem}[section]
\newtheorem{lem}[thm]{Lemma}
\newtheorem{prop}{Proposition}
\newtheorem{pblm}[thm]{Problem}
\newtheorem{clm}{Claim}
\numberwithin{equation}{section}
\numberwithin{equation}{section}
\newcommand{\gx}{G^{\times}}
\begin{document}

\title{\LARGE Light edges in 1-planar graphs of minimum degree 3
\thanks{Mathematics Subject Classification (2010): 05C15, 05C10}
\thanks{Supported by the National Natural Science Foundation of China (11871055, 11301410) and the Youth Talent Support Plan of Xi'an Association for Science and Technology (2018-6).}}
\author{Bei Niu, \,\,Xin Zhang \thanks{Corresponding author. Email: xzhang@xidian.edu.cn.}\\
{\small  School of Mathematics and Statistics, Xidian University, Xi'an, Shaanxi, 710071, China}
}
\date{}

 \maketitle

\begin{abstract}\baselineskip 0.56cm
A graph is 1-planar if it can be drawn in the plane so that each edge is crossed by at most one another edge. In this work we prove that each 1-planar graph of minimum degree at least $3$ contains an edge with degrees of its endvertices of type  $(3,\leq23)$ or $(4,\leq11)$ or $(5,\leq9)$ or $(6,\leq8)$ or $(7,7)$. Moreover, the upper bounds $9,8$ and $7$ here are sharp and the upper bounds $23$ and $11$ are very close to the possible sharp ones, which may be 20 and 10, respectively. This generalizes a result of Fabrici and Madaras [\emph{Discrete Math.}, 307 (2007) 854--865] which says that each 3-connected 1-planar graph contains a light edge, and improves a result of Hud\'ak and  \v Sugerek [\emph{Discuss.\,Math.\,Graph Theory}, 32(3) (2012) 545--556], which states that each 1-planar graph of minimum degree at least $4$ contains an edge with degrees of its endvertices of type $(4,\leq 13)$ or $(5,\leq 9)$ or $(6,\leq 8)$ or $(7, 7)$.
\vspace{3mm}

\noindent \emph{Keywords: 1-planar graph; light edge; degree}.
\end{abstract}

\baselineskip 0.56cm

\section{Introduction }

All graphs considered in this paper are finite, simple and undirected. Notations are standard (cf.\,\cite{Bondy}) unless we state otherwise.

A \emph{planar graph} is a graph that can be drawn in the plane in such a way that no edges cross each other. Such a drawing is called a \emph{plane graph}. For a plane graph $G$, $V(G),E(G)$ and $F(G)$ denote the set of vertices, edges, and faces of $G$, respectively.
A $k$-, $k^+$- and $k^-$\emph{-vertex} (resp.\,\emph{face}) is a vertex (resp.\,face) of degree $k$, at least $k$ and at most $k$, respectively.
An edge $uv$ is \emph{of type $(a,\leq b)$} if $d(u)=a$ and $d(v)\leq b$. Similarly we can define edges of type $(\leq a, \leq b)$ or $(a,\geq b)$ or $(\geq a, \geq b)$.
A graph is \emph{1-planar} if it can be drawn in the plane so that each edge is crossed by at most one another edge. Such a drawing so that the number of crossings is as small as possible is called a \emph{1-plane graph}.
The notion of 1-planarity was introduced by Ringel \cite{R65} while trying to simultaneously color the vertices and faces of a plane graph such that any pair of adjacent or incident elements receive different colors.

A well-known consequence of the Euler's Polyhedron Formula says that each planar graph has a vertex of degree at most 5. The beautiful
Kotzig's Theorem \cite{K.1955} states that each 3-connected planar graph contains an edge whose sum of degrees of
its endvertices is at most 13, and at most 11 if 3-vertices are absent. In addition, the bounds 13 and 11 are sharp. For other relative results on the light subgraphs of graphs embedded in the plane, we refer the readers to a recent survey contributed by Jendrol' and Voss \cite{JV}.

For 1-planar graphs, there are analogical results. For example, Fabrici and Madaras \cite{FM07} showed that each 1-planar graph contains a vertex of degree at most 7, and proved that each 3-connected 1-planar graph contains an edge with both endvertices of degrees at most
20. Here the bound 20 is also sharp.

As we know, every 3-connected graph has minimum degree at least 3. Hence a natural question is to ask whether each 1-planar graph of minimum degree at least 3 contains a light edge (i.e., an edge such that the sum, or the maximum, of degrees of its endvertices is bounded by a constant that is independent of the given graph). Actually, the answer to the above question is positive for 1-planar graph of minimum degree at least 4. Precisely, Hud\'ak and  \v Sugerek  \cite{HS12} proved

\begin{thm}\textcolor[rgb]{1.00,0.00,0.00}{\cite{HS12}}\label{HS-thm}
Each 1-planar graph of minimum degree at least $4$ contains an edge of type $(4,\leq 13)$ or $(5,\leq 9)$ or $(6,\leq 8)$ or $(7, 7)$.
\end{thm}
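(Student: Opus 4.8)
The plan is to argue by contradiction via a discharging argument on the planarization of a 1-plane drawing. Suppose $G$ is a 1-planar graph with $\delta(G)\geq 4$ that contains none of the listed edge types, and fix a 1-plane drawing of $G$ with the minimum number of crossings. First I would record the structural consequences of the assumption: every neighbor of a $4$-vertex has degree at least $14$, every neighbor of a $5$-vertex has degree at least $10$, and every neighbor of a $6$-vertex has degree at least $9$; moreover, since $(7,4),(7,5),(7,6)$ are excluded as instances of the first three types and $(7,7)$ is excluded outright, every neighbor of a $7$-vertex has degree at least $8$. Thus the ``poor'' vertices, of degrees $4,5,6,7$, are all surrounded by comparatively ``rich'' neighbors.

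Next I would pass to the associated plane graph $\gx$ obtained by turning each crossing point into a new vertex of degree $4$ (a \emph{crossing vertex}), so that $\gx$ is an ordinary plane graph whose true vertices keep their degrees from $G$. Since the drawing is 1-planar each edge is crossed at most once, and using crossing-minimality I would further assume that the two edges meeting at any crossing have four distinct endpoints, which controls the local structure around each crossing vertex. Applying Euler's formula to $\gx$, I would assign the initial charge $\mu(x)=d(x)-4$ to every vertex and every face $x$ of $\gx$; then $\sum_x \mu(x)=-8$, because $\sum_v d(v)=\sum_f d(f)=2|E(\gx)|$ and $|V(\gx)|-|E(\gx)|+|F(\gx)|=2$. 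Under this charge the only objects carrying negative charge are the $3$-faces, each with $-1$, while crossing vertices and true $4$-vertices carry $0$ and every true $k$-vertex with $k\geq 5$ carries $k-4>0$.

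The heart of the proof is the design of discharging rules that move the surplus from rich true vertices and from large faces onto the deficient $3$-faces without creating new deficiencies. The rules I would use send charge from each high-degree true vertex, and across the edges of faces of degree at least $5$, to the incident or adjacent $3$-faces; where a $3$-face sits inside a crossing configuration and its vertices in $\gx$ are crossing vertices, the charge would be relayed through those crossing vertices from the rich true endpoints of the crossing edges. These amounts would be calibrated so that a $3$-face meeting a $4$-, $5$-, $6$-, or $7$-vertex is paid for by the guaranteed high-degree vertex attached to it or to its crossing configuration. I would then verify, element by element, that after discharging every vertex and face of $\gx$ has nonnegative charge, yielding the contradiction $0\leq\sum_x \mu(x)=-8$.

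The main obstacle will be the bookkeeping around crossing vertices, together with the fact that the types $(5,\leq 9)$, $(6,\leq 8)$ and $(7,7)$ leave very little slack. Because a poor true vertex may reach its rich neighbors only across a crossing, the charge that a crossing vertex relays must be accounted for exactly, and the constants in the rules must be chosen so that even a $3$-face whose incident true vertices have the smallest degrees the assumption permits still collects a full unit. Confirming nonnegativity in these extremal local configurations — rather than the Euler setup itself — is the delicate part, and it is precisely here that the thresholds $13,9,8,7$ enter the computation.
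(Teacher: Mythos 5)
Your proposal reproduces the correct general framework --- and it is indeed the framework this paper uses (note the paper does not reprove this cited theorem of Hud\'ak and \v{S}ugerek; it proves the stronger Theorem~2 by exactly such a discharging argument on the associated plane graph $G^{\times}$). But what you have written is a plan, not a proof. The setup (charges $d(x)-4$, total $-8$, the structural consequences of a counterexample) is the trivial part; the entire mathematical content of a discharging proof lies in stating the explicit rules and verifying, element by element, that every vertex and every face ends nonnegative. You defer both: ``the amounts would be calibrated so that \dots'' is simply the conclusion restated as a hope, and the thresholds $13,9,8,7$ are validated by nothing. As it stands, no step of the argument can be checked.

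Moreover, the calibration you gesture at --- rich true vertices and $5^{+}$-faces paying their incident or adjacent $3$-faces --- provably cannot work on its own, so the missing part is not routine. Consider a true $14^{+}$-vertex $v$ in $G^{\times}$ all of whose incident faces are false $3$-faces, each of the form $\{v,u,x\}$ with $u$ a true $4$-vertex and $x$ a false vertex; this is exactly the local structure a counterexample permits, since edges of type $(4,\geq 14)$ are allowed. Each such face has charge $-1$ and its other two incident vertices carry charge $0$, so $v$ alone would have to supply a full unit to each of its $d_{G^{\times}}(v)$ incident faces while owning only $d_{G^{\times}}(v)-4$; this fails for every $d$. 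Hence charge must be relayed \emph{across} crossings from faces and vertices not incident with the deficient face, and the donors must be shown to survive the loss. That is precisely what the paper's machinery exists for: the rules R6.1--R6.4 sending charge from a face through a false vertex to the opposite face (or even to a vertex), the notion of $k$-special faces which the poor $4$-, $5$- and $6$-vertices themselves must help pay for (R1--R3), the non-adjacency lemma for false vertices bounding how many false $3$-faces can surround a vertex, and the accounting claim $\rho^{+}(f)\geq\rho^{-}(f)$ (plus $1$ for $3$-faces) guaranteeing that relaying faces stay nonnegative. None of this, nor any substitute for it, appears in your proposal; until rules of this kind are written down and the case analysis is carried out, you do not have a proof.
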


\noindent Moreover, they also claimed that for 1-planar graphs of minimum degree at least $5$, these bounds in Theorem \ref{HS-thm} are best possible and the list of edges is minimal (in the
sense that, for each of the considered edge types there are 1-planar graphs whose set of types of edges contains just the selected edge type). Actually, there exists 1-planar graph with only edges of type $(5,9)$, $(5,10)$ and $(9,10)$, or with only edges of type $(6,8)$ and $(8,8)$, or with only edges of type $(7,7)$. The first two graphs were constructed by Hud\'ak and \v Sugerek \cite{HS12}, and the last graph (i.e., 7-regular 1-planar graph) was introduced by Fabrici and Madaras \cite{FM07}.

Motivated by Theorem \ref{HS-thm} of Hud\'ak and \v Sugerek, and also by the above mentioned result of Fabrici and Madaras \cite{FM07}, we investigate light edges in 1-planar graphs by proving that each 1-planar graph of minimum degree at least 3 contains a light edge. More precisely, we are able to prove the following main theorem of this paper.

\begin{thm}\label{our-thm}
Each 1-planar graph of minimum degree at least $3$ contains an edge of type $(3,\leq23)$ or $(4,\leq11)$ or $(5,\leq9)$ or $(6,\leq8)$ or $(7,7)$.
\end{thm}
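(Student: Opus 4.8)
The plan is to argue by contradiction via a discharging argument on the plane graph associated with a crossing-minimal drawing. Suppose $G$ is a connected $1$-planar graph with $\delta(G)\ge 3$ that contains none of the listed edges, and fix a $1$-plane drawing of $G$ with the fewest crossings. First I would record the standard consequences of crossing-minimality: no edge crosses itself, adjacent edges do not cross, and two edges cross at most once; hence the two edges forming any crossing have four distinct endvertices. Replacing each crossing by a new degree-$4$ \emph{crossing vertex} yields a plane graph $\gx$ in which every original (real) vertex keeps its degree and the new crossing vertices have degree exactly $4$. The hypothesis that $G$ has no light edge of the stated types translates into clean local constraints in $G$: every $3$-vertex has all neighbours of degree $\ge 24$, every $4$-vertex of degree $\ge 12$, every $5$-vertex of degree $\ge 10$, every $6$-vertex of degree $\ge 9$, and every $7$-vertex of degree $\ge 8$; these follow by combining the five excluded edge types with $\delta(G)\ge 3$.

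Next I would assign to each $x\in V(\gx)\cup F(\gx)$ the charge $\mu(x)=d(x)-4$. Since $\gx$ is a connected plane graph, Euler's formula together with $\sum_{v}d(v)=\sum_f d(f)=2\abs{E(\gx)}$ gives $\sum_x\mu(x)=-8<0$. Under this assignment the crossing vertices, the $4$-vertices, and the $4$-faces carry charge $0$, and the only elements with negative charge are the $3$-vertices and the $3$-faces, each with charge $-1$; all vertices of degree $\ge 5$ and all faces of degree $\ge 5$ carry a positive surplus. The strategy is to redistribute charge so that every element ends with nonnegative charge, contradicting the negative total.

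The discharging rules must funnel this surplus to the $3$-vertices and $3$-faces, and the delicate point is that in $\gx$ a $3$-vertex need not be adjacent to its (high-degree) real neighbours: a crossed edge places a crossing vertex between them. Accordingly I would route charge through the crossing vertices and the incident faces. Roughly, the rules would have each high-degree real vertex pay a fixed amount to each incident $3$-face and to each adjacent (possibly across a crossing vertex) low-degree vertex, with crossing vertices acting as conduits, while each large face pays to its incident small-degree vertices and to neighbouring short faces. Making this precise requires structural lemmas describing the local environment of a crossing — in particular, which of the triangular faces around a crossing vertex actually occur and which pairs among the four endvertices of the crossing edges are joined by edges of $G$ — so that no triangular face is left without a high-degree real vertex nearby to subsidize it.

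The main obstacle will be the charge-counting for real vertices after discharging, and this is exactly where the numerical thresholds are produced. The worst case is a $3$-vertex: it must gain $1$, which it extracts from its three neighbours of degree $\ge 24$, while those same neighbours must simultaneously feed the short faces around them; balancing these competing demands yields the bound $24$, hence edges of type $(3,\le 23)$, and the comfortable slack in this estimate is the reason one expects the true neighbour threshold to be $21$, that is, type $(3,\le 20)$. The analogous but tighter computation for $4$-vertices, whose neighbours have degree $\ge 12$, yields type $(4,\le 11)$ with conjectured sharp value $(4,\le 10)$, whereas the computations for $5$-, $6$- and $7$-vertices must be carried out so as to meet the thresholds $(5,\le 9)$, $(6,\le 8)$ and $(7,7)$ exactly, matching the known extremal $1$-planar graphs. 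Verifying that every vertex and every face is left with nonnegative charge in all these cases contradicts $\sum_x\mu(x)=-8$ and completes the proof.
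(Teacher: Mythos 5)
Your setup---crossing-minimal drawing, the associated plane graph $G^{\times}$ with crossings turned into degree-4 false vertices, charge $d(x)-4$ on vertices and faces, total charge $-8$ by Euler's formula, and the translation of the no-light-edge hypothesis into the constraints that neighbours of $3$-, $4$-, $5$-, $6$-, $7$-vertices have degree at least $24$, $12$, $10$, $9$, $8$ respectively---matches the paper's framework exactly and is correct as far as it goes. But everything after that is a promissory note rather than a proof. You never state a single discharging rule: you say the rules would ``roughly'' have high-degree vertices pay incident faces and nearby low-degree vertices, with crossing vertices acting as conduits, and that ``making this precise requires structural lemmas,'' which you also neither state nor prove. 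In a discharging argument the entire mathematical content lies in the precise rules and in the case-by-case verification that every vertex and every face ends with nonnegative charge; those verifications are where the thresholds $23$, $11$, $9$, $8$, $7$ are actually forced. Your assertion that the computations ``must be carried out so as to meet the thresholds exactly'' is precisely the statement to be proved, not an argument for it.

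Concretely, here is what the missing part has to contain, judging by what the paper does. One needs a structural lemma about crossings (false vertices are never adjacent in $G^{\times}$; a $3$-vertex incident with two false $3$-faces must be incident with a $5^{+}$-face; a true $4$-vertex is incident with at most three false $3$-faces), a notion of $k$-special false $3$-faces keyed to the excluded edge types, and eight rules, including rules in which a \emph{face} sends charge to another face or to a vertex \emph{through} a false vertex. This face-to-face routing is essential: in the hardest configurations a $3$-vertex surrounded by false $3$-faces and $4$-faces receives charge from faces it is not even incident with, and one must simultaneously verify that every donor face itself stays nonnegative after paying---that consistency check (Claims on what $3$-faces, $4$-faces and $5^{+}$-faces can afford, with constants like $\tfrac{1}{6}$, $\tfrac{3}{10}$, $\tfrac{7}{18}$, $\tfrac{5}{12}$) is the heart of the proof and is entirely absent from your sketch. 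Your crossing-vertices-as-conduits picture does not by itself show such a routing exists. Finally, your side remark that the slack in the $3$-vertex estimate ``is the reason one expects'' the bound $20$ is unsupported: the paper leaves improving $23$ to $20$ as an open problem precisely because this method, with all its details executed, does not yield it.
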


Clearly, Theorem \ref{our-thm} can be seen as an improvement and also a generalization of Theorem \ref{HS-thm}.
Although we improve 13 in Theorem \ref{HS-thm} to 11, we still do not know whether 11 is sharp. If it can be improved, then it shall be 10, since
Hud\'ak and \v Sugerek \cite{HS12} constructed a 1-planar graph with only edges of type $(4,10)$ and $(10,10)$. On the other hand, the sharpness of the upper bound 23 in Theorem \ref{our-thm} is unclear. Since Fabrici and Madaras \cite{FM07} constructed a 1-planar graph with only edges of type $(3,20)$ and $(20,20)$, we want to know whether the upper bound 23 in Theorem \ref{our-thm} can be replaced by 20. In conclusion, we raise the following problem.

\begin{pblm}
Does each 1-planar graph of minimum degree at least $3$ contain an edge of type $(3,\leq20)$ or $(4,\leq10)$ or $(5,\leq9)$ or $(6,\leq8)$ or $(7,7)$?
\end{pblm}
%

\section{The existence of a light edge}

The \emph{associated plane graph} $\gx$ of a 1-plane graph $G$ is the plane graph that is obtained from $G$ by turning
all crossings of $G$ into new vertices of degree four. Those new 4-vertices are called \emph{false vertices} of $\gx$, and the original vertices of $G$ are called \emph{true vertices} of $\gx$. A face of $\gx$ is \emph{false} if it is incident with at least one false vertex, and \emph{true} otherwise.

\begin{lem}\label{n.adj}
If $G$ is a $1$-plane graph, then

(a) false vertices in $\gx$ are not adjacent;

(b) if a $3$-vertex $v$ is incident with two
$3$-faces and adjacent to two false vertices in $\gx$, then $v$ is incident with a $5^+$-face;

(c) there exists no edge $uv$ in $\gx$ such that $d_{\gx}(v)=3$, $u$ is a false vertex, and $uv$ is incident with two $3$-faces;

(d) if $v$ is a true 4-vertex in $\gx$, then $v$ is incident with at most three false 3-faces.
\end{lem}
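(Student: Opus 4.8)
The plan is to prove Lemma~\ref{n.adj} by analyzing the local structure of the $1$-plane graph $G$ around false vertices and low-degree true vertices, using the defining property that each edge of $G$ is crossed at most once. The four parts are largely independent, so I would treat them in sequence, establishing part~(a) first since it underpins the intuition for the rest.

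\textbf{Part (a).} A false vertex of $\gx$ arises from a crossing in $G$, where two edges $e_1=x_1y_1$ and $e_2=x_2y_2$ of $G$ meet. The four half-edges emanating from this crossing point lead to the four original endvertices $x_1,y_1,x_2,y_2$, which are true vertices. I would argue that if two false vertices $w_1,w_2$ were adjacent in $\gx$, then the edge $w_1w_2$ would be a segment of some original edge $e$ of $G$ lying strictly between two distinct crossing points on $e$; this means $e$ is crossed at least twice, contradicting $1$-planarity. Thus every neighbor of a false vertex is a true vertex, giving~(a).

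\textbf{Parts (b) and (c).} These concern a low-degree true vertex $v$ incident with $3$-faces and adjacent to false vertices. For~(c), suppose for contradiction that the edge $uv$ with $d_{\gx}(v)=3$, $u$ false, is incident with two $3$-faces $vux$ and $vuy$. Then $u$ is adjacent to $v$, $x$, $y$ in $\gx$, and by~(a) the vertices $x,y$ are true. I would examine the crossing at $u$: the two crossing edges of $G$ pass through $u$, and the four faces around $u$ are bounded by the four half-edges. The constraint that $uv$ bounds two $3$-faces forces $v$ to coincide with configurations that recreate a double crossing or a multi-edge, contradicting either $1$-planarity or simplicity of $G$. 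Part~(b) is a refinement: with $d(v)=3$ and two false neighbors, I would show the two $3$-faces cannot both be $3$-faces on the remaining side, so that the third face at $v$ must have size at least~$5$ — the intermediate $4$-face case being excluded by a crossing-parity or adjacency argument combined with~(a) and~(c).

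\textbf{Part (d).} For a true $4$-vertex $v$, there are four faces in the rotation around $v$. I would count how many of these can simultaneously be false $3$-faces. Each false $3$-face at $v$ contains a false vertex adjacent to $v$; by~(a) consecutive false vertices around $v$ cannot be adjacent to each other, which constrains how false $3$-faces can be packed. The heart of the argument is showing that all four faces cannot be false $3$-faces: if they were, the four neighbors of $v$ would all be false vertices, and tracing the crossing edges through these four false vertices would again force a repeated crossing on some edge of $G$ or produce a multi-edge, both impossible. Hence at most three such faces occur.

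The main obstacle I anticipate is the case analysis in parts~(b) and~(d), where one must carefully track which of the edges incident to $v$ are original edges of $G$ versus segments created by crossings, and then rule out the extremal configuration by reconstructing the crossing edges and invoking $1$-planarity or simplicity. The cleanest route is to always pull back from $\gx$ to $G$: interpret each false vertex as a crossing of two specific edges, and show that the forbidden face configurations would require one of those edges to be crossed twice. Part~(a) is the key lemma that makes all of this work, so I would state and use it repeatedly.
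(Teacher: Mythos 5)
Your proposal has two genuine problems, and it is worth noting first that the paper itself does not prove parts (a)--(c) at all: it cites them from \cite[Lemma 1]{ZW11} and only proves (d). Your part (a) is correct (and is the standard argument). But your plan for part (c) cannot work as stated. You intend to derive a contradiction with ``either $1$-planarity or simplicity,'' yet the forbidden configuration of (c) is perfectly consistent with both. Take $G$ with $V(G)=\{v,x,y,z\}$ and $E(G)=\{vx,vy,vz,xy\}$, drawn with $vz$ crossing $xy$ at a point $u$: every edge is crossed at most once, $G$ is simple, $d_{\gx}(v)=3$, $u$ is false, and $uv$ lies on the two $3$-faces $uvx$ and $uvy$. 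What this drawing violates is \emph{crossing minimality} --- the paper's definition of a $1$-plane graph requires the number of crossings to be as small as possible, and this $G$ is planar. So any proof of (c) must exhibit a redrawing with strictly fewer crossings; an argument that only invokes ``each edge crossed at most once'' plus simplicity, as yours does, is doomed. (Part (b), by contrast, can be pushed through with your tools: if the third face were a $4$-face $u_1vu_2t$, tracing the crossed edges at $u_1$ and $u_2$ forces either two parallel edges from $w$ to $t$ or one edge $wt$ crossed twice --- but your sketch, ``a crossing-parity or adjacency argument,'' never gets to this.)

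Part (d) contains a concrete logical error. You claim that if all four faces around the true $4$-vertex $v$ were false $3$-faces, then ``the four neighbors of $v$ would all be false vertices.'' That inference is wrong, and indeed impossible: consecutive neighbors $v_i,v_{i+1}$ are adjacent in $\gx$ (they share an edge of the $3$-face), so by (a) they cannot both be false. The correct deduction, which is exactly the paper's proof, is that the false vertices must \emph{alternate}: without loss of generality $v_1$ and $v_3$ are false and $v_2,v_4$ are true. Then the edge of $G$ crossed at $v_1$ has its two half-segments $v_1v_2$ and $v_1v_4$ (these lie on the faces $vv_1v_2$ and $vv_4v_1$), so it is an edge of $G$ joining $v_2$ to $v_4$; the crossing at $v_3$ likewise yields a second edge of $G$ joining $v_2$ to $v_4$. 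Two parallel edges contradict the simplicity of $G$. Since your ``tracing'' step is predicated on the impossible all-four-false configuration, it proves nothing as written; the argument is repairable, but only by replacing your intermediate claim with the alternation argument above.
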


\begin{proof}
The conclusions (a), (b) and (c) come from \cite[Lemma 1]{ZW11}. For (d), suppose that $v$ is a true 4-vertex in $\gx$ incident with four false 3-faces $\{vv_1v_2\}, \{vv_2v_3\}, \{vv_3v_4\}$ and $\{vv_4v_1\}$. By (a), we assume, without loss of generality, that $v_1$ and $v_3$ are false. Now, there are two edges in $G$ connecting $v_2$ to $v_4$, one of which passes through $v_1$ and the other passes through $v_3$. This contradicts the fact that $G$ is simple.
\end{proof}

\begin{proof}[\emph{The Proof of Theorem \ref{our-thm}}]
Suppose that there is a 1-plane graph $G$ of minimum degree at least 3 contradicting Theorem \ref{our-thm}. So $G$ contains only edges of type $(3,\geq24)$ or $(4,\geq12)$ or $(5,\geq10)$ or $(6,\geq9)$ or $(\geq7,\geq8)$. We apply the discharging method to the associated plane graph $\gx$ of $G$. Formally, for each vertex $v\in V(\gx)$, let $c(v):=d_{\gx}(v)-4$ be its initial charge, and for each face $f\in F(\gx)$, let $c(f):=d_{\gx}(f)-4$ be its initial charge. Clearly,
$$\sum\limits_{x\in V(\gx)\bigcup F(\gx)}c(x)=-8<0$$ by the well-known Euler's formula.

Let $f=\{vv_{1}v_{2}\}$ be a false 3-face of ${\gx}$ such that $v$ is a false vertex generating by $v_{1}v_{3}$ crossing $v_{2}v_{4}$ in $G$.
If $d_{\gx}(v_{1})=k,~v_{2}v_{3}\in E(G)$ and
$$ d_{\gx}(v_{4})\leq\left\{
\begin{aligned}
11&,~~{\rm if}~~k=4 \\
9&,~~{\rm if}~~k=5 \\
8&,~~{\rm if}~~k=6
\end{aligned}
\right.
$$
then $f$ is said to be \emph{k-special}, where $4\leq k\leq6$.


We define discharging rules as follows.

\begin{description}\vspace{-.8em}
  \item[R1] Every true 4-vertex of $\gx$ sends $\frac{1}{6}$ to each of its incident 4-special faces.
  \item[R2] Every 5-vertex of $\gx$ sends $\frac{3}{10}$ to each of its incident 5-special faces, and $\frac{1}{5}$ to each of its incident 3-faces that are not 5-special.
  \item[R3] Every 6-vertex of $\gx$ sends $\frac{7}{18}$ to each of its incident 6-special faces, and $\frac{1}{3}$ to each of its incident 3-faces that are not 6-special.
  \item[R4] Every 7-vertex of $\gx$ sends $\frac{1}{2}$ to each of its incident false 3-faces.
  \item[R5] Every $8^{+}$-vertex $v$ of $\gx$ sends $\frac{d_{\gx}(v)-4}{d_{\gx}(v)}$ to each of its incident faces.
  \item[R6] Let $v$ be a false vertex of $\gx$ such that $v_{1}v_{3}$ crossed $v_{2}v_{4}$ in $G$ at $v$, and let $f_{i}$ with $1\leq i \leq4$ be the face that is incident with $vv_{i}$ and $vv_{i+1}$ in $\gx$ (here $v_{5}$ is recognized as $v_{1}$).
        \begin{description}
        \item[R6.1] Suppose that min$\{d_{\gx}(v_{1}),d_{\gx}(v_{2})\}\geq24$ and $d_{\gx}(v_{3})=3$.
            \item[$\bullet$]  If $d_{\gx}(v_{4})=3$, then $f_{1}$ sends $\frac{1}{6}$, through $v$, to each of the elements among $f_{2}$, $f_{4}$, $v_{3}$, $v_{4}$.
            \item[$\bullet$] If $d_{\gx}(v_{4})\geq4$, then $f_{1}$ sends $\frac{1}{3}$ to both $f_{2}$ and $v_{3}$ through $v$.
        \item[R6.2] Suppose that $23\geq $min$\{d_{\gx}(v_{1}),d_{\gx}(v_{2})\}\geq12$ and $d_{\gx}(v_{3})\leq6.$
            \begin{description}
            \item[$\bullet$] If $f_{1}$ is a 3-face, then $f_{1}$ sends $\frac{1}{6}$ to both $f_{2}$ and $f_{4}$ through $v$ while $d_{\gx}(v_{4})\leq6$, and $\frac{1}{3}$ to $f_{2}$ through $v$ while $d_{\gx}(v_{4})\geq7$.
            \item[$\bullet$] If $f_{1}$ is a $4^{+}$-face, then $f_{1}$ sends $\frac{1}{3}$ to both $f_{2}$ and $f_{4}$ through $v$.
            \end{description}
        \item[R6.3] Suppose that $11\geq$ min$\{d_{\gx}(v_{1}),d_{\gx}(v_{2})\}\geq10$ and $d_{\gx}(v_{3})\leq6.$
            \begin{description}
            \item[$\bullet$] If $f_{1}$ is a 3-face, then $f_{1}$ sends $\frac{1}{10}$ to both $f_{2}$ and $f_{4}$ through $v$ while $d_{\gx}(v_{4})\leq6$, and $\frac{1}{5}$ to $f_{2}$ through $v$ while $d_{\gx}(v_{4})\geq7$.
            \item[$\bullet$] If $f_{1}$ is a $4^{+}$-face, then $f_{1}$ sends $\frac{3}{10}$ to both $f_{2}$ and $f_{4}$ through $v$.
            \end{description}
        \item[R6.4] Suppose that min$\{d_{\gx}(v_{1}),d_{\gx}(v_{2})\}=9$ and $d_{\gx}(v_{3})\leq6.$
            \begin{description}
            \item[$\bullet$] If $f_{1}$ is a 3-face, then $f_{1}$ sends $\frac{1}{18}$ to both $f_{2}$ and $f_{4}$ through $v$ while $d_{\gx}(v_{4})\leq6$, and $\frac{1}{9}$ to $f_{2}$ through $v$ while $d_{\gx}(v_{4})\geq7$.
            \item[$\bullet$] If $f_{1}$ is a $4^{+}$-face, then $f_{1}$ sends $\frac{5}{18}$ to both $f_{2}$ and $f_{4}$ through $v$.
            \end{description}
        \end{description}
  \item[R7] Every $4^-$-face of $\gx$ redistributes its remaining charge after applying the previous rules equitably to each of its incident true $4^{-}$vertices. \vspace{-.8em}
  \item[R8] Every $5^{+}$-face of $\gx$ sends $\frac{2}{3}$ to each of its incident 3-vertices, and then redistributes its remaining charge after applying the previous rules equitably to each of its incident true 4-vertices.
\end{description}

Let $c'(x)$ be the charge of $x\in V(\gx)\cup F(\gx)$ after applying the above rules. Since our rules only move charge around, and do not affect
the sum, we have $$\sum_{x\in V(\gx)\cup F(\gx)}c'(x)=\sum_{x\in V(\gx)\cup F(\gx)}c(x)<0.$$
Next, we prove
that $c'(x)\geq 0$ for each $x\in V(\gx)\cup F(\gx)$. This leads to $$\sum\limits_{x\in V(\gx)\cup F(\gx)}c'(x)\geq 0,$$ a contradiction.

\begin{clm}\label{clm1}
Every true 3-face incident with one 3-vertex $v$ sends at least $\frac{2}{3}$ to $v$.
\end{clm}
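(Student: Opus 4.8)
The plan is to follow the charge carried by the true $3$-face $f$ through the discharging process, exploiting the structural restriction that comes from assuming $G$ is a counterexample. Write $f=\{vv'v''\}$ where $v$ is the given $3$-vertex. First I would record the initial charge: since $f$ is a $3$-face, $c(f)=d_{\gx}(f)-4=-1$. The crucial structural observation is that the two neighbours $v',v''$ of $v$ on $f$ must have very high degree. Because $f$ is a \emph{true} face, all three of its bounding edges $vv'$, $vv''$, $v'v''$ are genuine edges of $G$; in particular $vv',vv''\in E(G)$. As $v$ is a $3$-vertex and the counterexample $G$ contains, among edges meeting a $3$-vertex, only those of type $(3,\geq24)$, both $vv'$ and $vv''$ are of type $(3,\geq24)$. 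Hence $d_{\gx}(v')=d_G(v')\geq24$ and $d_{\gx}(v'')\geq24$ (true vertices retain their degree in $\gx$). In particular $v'$ and $v''$ are $8^+$-vertices, so $v$ is the \emph{unique} true $4^-$-vertex incident with $f$.

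Next I would account for every bit of charge reaching $f$ before R7 acts. Since $f$ is a true face it is incident with no false vertex, so it is neither a sender nor a recipient in any instance of R6, and it is not $k$-special (specialness is defined only for false $3$-faces); therefore R1--R4 contribute nothing to $f$. The sole inflow is via R5: writing $d'=d_{\gx}(v')\geq24$, the vertex $v'$ sends $\frac{d'-4}{d'}=1-\frac{4}{d'}\geq 1-\frac{4}{24}=\frac{5}{6}$ to $f$, and likewise $v''$ sends at least $\frac{5}{6}$ to $f$. Consequently, after R1--R6 have been applied, the charge of $f$ is at least $-1+\frac{5}{6}+\frac{5}{6}=\frac{2}{3}$.

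Finally, R7 applies to $f$ since it is a $4^-$-face: it redistributes all of its remaining charge equitably among its incident true $4^-$-vertices. As established above, $v$ is the only such vertex, so $f$ transfers its entire remaining charge, which is at least $\frac{2}{3}$, to $v$. This proves the claim. I expect the only delicate point to be the bookkeeping that certifies $f$ neither loses charge nor gains extra charge through the false-vertex rules R6 (and the special-face rules R1--R4) before R7 acts; once it is clear that $f$ is an ordinary true triangle whose two non-$v$ corners are $24^+$-vertices, the quantitative bound follows directly from R5 and R7.
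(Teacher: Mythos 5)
Your proof is correct and follows essentially the same route as the paper's: the counterexample assumption forces both other vertices of the true $3$-face to be $24^+$-vertices, R5 then supplies at least $2\times\frac{5}{6}$ to the face, and R7 passes the remaining charge, at least $-1+\frac{5}{3}=\frac{2}{3}$, entirely to $v$ as the unique incident true $4^-$-vertex. The extra bookkeeping you provide (that R6 cannot withdraw charge from a true face, and that $v$ is the only recipient under R7) is exactly what the paper leaves implicit.
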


\begin{proof}
Such a true 3-face sends to $v$ at least $2\times \frac{24-4}{24}-1=\frac{2}{3}$ by R5 and R7, since the neighbors of $v$ on this face are $24^+$-vertices.
\end{proof}

\begin{clm}\label{clm2}
Every true 3-face incident with one 4-vertex $v$ sends at least $\frac{1}{3}$ to $v$.
\end{clm}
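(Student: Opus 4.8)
The plan is to run the same argument as in Claim~\ref{clm1}, but now the governing bound on the neighbours is $12$ rather than $24$. Let $f=\{vab\}$ be a true $3$-face with $d_{\gx}(v)=4$. Since $f$ is a true face, all of $v,a,b$ are true vertices and $va,vb$ are genuine edges of $G$. Because $G$ contains, among the edges incident with a $4$-vertex, only edges of type $(4,\geq 12)$, I would first record that $d_{\gx}(a),d_{\gx}(b)\geq 12$. In particular $a$ and $b$ are $8^+$-vertices, while $v$ is the unique true $4^{-}$-vertex lying on $f$.

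Next I would account for the charge that reaches $f$. Among the discharging rules, only R1--R5 push charge from vertices onto faces, and of these R1--R4 feed charge exclusively to false $3$-faces or to faces meeting a $5$-, $6$-, or $7$-vertex, situations that do not arise here; moreover R6 transfers charge only between two faces sharing a common false vertex, so it cannot act on the true face $f$. Hence the only charge entering $f$ comes from R5 at $a$ and $b$. Since $t\mapsto (t-4)/t$ is increasing, each of $a$ and $b$ contributes at least $(12-4)/12=\tfrac{2}{3}$, so the charge on $f$ just before R7 is at least
$$ -1+2\cdot\tfrac{2}{3}=\tfrac{1}{3}. $$
Finally, R7 redistributes this remaining charge equitably to the incident true $4^{-}$-vertices of $f$. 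Since $v$ is the only such vertex, it receives all of it, namely at least $\tfrac{1}{3}$, which proves the claim.

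The one point demanding care is the bookkeeping of which rules can touch a true $3$-face: I would need to confirm that nothing drains charge from $f$ before R7 and that no low-degree vertex other than $v$ can sit on $f$. Both are immediate from the $(4,\geq 12)$ edge-type restriction together with the observation that R6 operates purely among false faces, so I anticipate no real obstacle here.
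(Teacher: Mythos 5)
Your proof is correct and follows essentially the same route as the paper's: the two neighbours of $v$ on the true $3$-face must be $12^+$-vertices by the edge-type restriction, each sends at least $\frac{12-4}{12}=\frac{2}{3}$ to the face by R5, and R7 passes the resulting surplus $-1+2\cdot\frac{2}{3}=\frac{1}{3}$ entirely to $v$, the unique incident true $4^-$-vertex. Your extra bookkeeping (that R1--R4 and R6 cannot touch a true face with no $5$-, $6$-, or $7$-vertex) is a valid elaboration of what the paper leaves implicit.
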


\begin{proof}
Such a true 3-face sends to $v$ at least $2\times \frac{12-4}{12}-1=\frac{1}{3}$ by R5 and R7, since the neighbors of $v$ on this face are $12^+$-vertices.
\end{proof}

A \emph{transitive false vertex} $v$ on $f\in F(\gx)$ is a false vertex such that its two neighbors $u,w$ on $f$ have degrees both at least 9. If $f$ sends out charges via a false vertex, then this false vertex must be transitive by R6.

\begin{clm}\label{clm-new}
Let $f$ be a face in $\gx$ and let $\rho^+(f),\rho^-(f)$ respectively be the total charges that $f$ receives from its incident $9^+$-vertices, and that $f$ sends out via its incident transitive false vertices. If $d_{\gx}(f)\geq 4$, then $\rho^+(f)\geq \rho^-(f)$, and if $d_{\gx}(f)=3$, then $\rho^+(f)\geq \rho^-(f)+1$.
\end{clm}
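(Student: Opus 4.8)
The plan is to regard both $\rho^-(f)$ and $\rho^+(f)$ as sums localized at the false vertices of $f$, and to pay for every outgoing transfer through a false vertex using the charge that $f$ harvests from the two high-degree vertices flanking that false vertex. Writing $\rho^-(f)=\sum_v\sigma_v(f)$, where $v$ runs over the transitive false vertices incident with $f$ and $\sigma_v(f)$ is the total charge that R6 moves out of $f$ through $v$, I would fix such a $v$, view $f$ as the face $f_1$ in the notation of R6 so that its two neighbours on $f$ are $v_1,v_2$, and split $\sigma_v(f)$ into a $v_1$-part and a $v_2$-part: the transfers to $f_2$ and to $v_4$ are billed to the $v_2$-side and those to $f_4$ and to $v_3$ to the $v_1$-side (the transfers to $v_3,v_4$ occur only under R6.1). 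Since $v$ is transitive, both $v_1$ and $v_2$ have degree at least $9$ and hence feed $f$ by R5 the amounts $\frac{d_{\gx}(v_1)-4}{d_{\gx}(v_1)}$ and $\frac{d_{\gx}(v_2)-4}{d_{\gx}(v_2)}$, which are exactly their contributions to $\rho^+(f)$.

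The heart of the argument is the inequality that the part of $\sigma_v(f)$ billed to a flanking vertex of degree $d$ never exceeds $\frac12\cdot\frac{d-4}{d}$, that is, half of what that vertex already pours into $f$. I would verify this by inspecting R6.1--R6.4: the rule firing at $v$ is governed by $m:=\min\{d_{\gx}(v_1),d_{\gx}(v_2)\}$, and a direct reading shows that the amount routed to either side is at most $\tfrac13$ when $m\ge12$ (R6.1 and R6.2), at most $\tfrac{3}{10}$ when $m\in\{10,11\}$ (R6.3), and at most $\tfrac{5}{18}$ when $m=9$ (R6.4). As a flanking vertex of degree $d$ satisfies $m\le d$, in every regime its side is at most $\frac12\cdot\frac{d-4}{d}$ (with equality only at $d\in\{9,10,12\}$). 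Because each high-degree vertex on the boundary walk of $f$ flanks at most two false vertices, it is billed at most $2\cdot\frac12\cdot\frac{d-4}{d}=\frac{d-4}{d}$, precisely its contribution to $\rho^+(f)$. Summing over all transitive false vertices and all high-degree vertices incident with $f$ then gives $\rho^+(f)\ge\rho^-(f)$, settling the case $d_{\gx}(f)\ge4$.

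For $d_{\gx}(f)=3$ the face carries a single false vertex $v$ by Lemma~\ref{n.adj}(a), and whenever $f$ moves any charge through $v$ this vertex is transitive, so the other two vertices of $f$ both have degree at least $9$ and already supply $\rho^+(f)\ge\frac59+\frac59=\frac{10}{9}$. It then remains to bound $\sigma_v(f)$ in each regime and compare: R6.4 yields $\sigma_v(f)\le\tfrac19$ against $\rho^+(f)\ge\tfrac{10}{9}$, R6.3 yields $\le\tfrac15$ against $\ge\tfrac65$, R6.2 yields $\le\tfrac13$ against $\ge\tfrac43$, and R6.1 yields $\tfrac23$ against $\ge\tfrac53$, each gap being exactly $1$, so that $\rho^+(f)\ge\rho^-(f)+1$. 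I expect the main obstacle to be not any single estimate but the disciplined bookkeeping: correctly assigning each of the up to four R6 transfers to a side, checking that the index $m$ governing a transfer is dominated by the degree of the vertex being billed, and guaranteeing that a high-degree vertex shared by two consecutive false vertices is never billed beyond its $\frac{d-4}{d}$ budget.
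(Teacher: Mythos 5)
Your proposal is correct and follows essentially the same route as the paper: the paper defines, for each transitive false vertex, a \emph{contribution} $\pi^+$ (the R5 income from its two flanking $9^+$-vertices) and a \emph{demand} $\pi^-$ (the R6 outflow through it), verifies $\pi^+\geq 2\pi^-$ rule by rule, and uses the fact that each $9^+$-vertex flanks at most two false vertices on $f$ to absorb the double counting --- your per-side billing of at most $\frac12\cdot\frac{d-4}{d}$ is just a finer-grained statement of that same factor-of-two argument. Your treatment of the $3$-face case, checking that the gap is exactly $1$ in each of the four regimes, matches the paper's computation $\min\bigl\{2\times\tfrac56-4\times\tfrac16,\,2\times\tfrac23-2\times\tfrac16,\,2\times\tfrac35-2\times\tfrac1{10},\,2\times\tfrac59-2\times\tfrac1{18}\bigr\}=1$ exactly.
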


\begin{proof}
If $f$ is true, or not incident with a transitive false vertex, then there is nothing to prove. Hence we assume that there are some transitive false vertices $v_1,v_2,\ldots,v_k$ on $f$. For each $v_i$ with $1\leq i\leq k$, let $u_i$ and $w_i$ be two neighbors of $v_i$ on the face $f$. Since false vertices are not adjacent in $\gx$, $u_i$ and $w_i$ are $9^+$-vertices by the definition of the transitive false vertex.
The \emph{contribution} of $v_i$ ($1\leq i\leq k$) is denoted by $\pi^+(v_i)=\frac{d_{\gx}(u_i)-4}{d_{\gx}(u_i)}+\frac{d_{\gx}(w_i)-4}{d_{\gx}(w_i)}$, and the \emph{demand} $\pi^-(v_i)$ of $v_i$ is the amount of charges that $f$ sends out via $v_i$. By R6, one can check that $\pi^+(v_i)\geq 2\pi^-(v_i)$ for each $1\leq i\leq k$. Therefore, $$\rho^+(f)\geq \frac{1}{2}\sum\limits_{i=1}^k \pi^+(v_i)\geq \frac{1}{2}\sum\limits_{i=1}^k 2\pi^-(v_i)=\sum\limits_{i=1}^k \pi^-(v_i)=\rho^-(f)$$ if $d_{\gx}(f)\geq 4$. On the other hand, if $d_{\gx}(f)=3$, then it is easy to see from R6 that
 $\pi^+(v_1)-\pi^-(v_1)\geq \min\{2\times \frac{5}{6}-4\times\frac{1}{6},2\times \frac{2}{3}-2\times\frac{1}{6},2\times \frac{3}{5}-2\times\frac{1}{10},2\times \frac{5}{9}-2\times\frac{1}{18}\}=1$, which implies that $\rho^+(f)=\pi^+(v_1)\geq \pi^-(v_1)+1=\rho^-(f)+1$.
\end{proof}


\begin{clm}\label{new-2}
Suppose that $f$ is a 4-face that is not incident with two false vertices.

(1) If $f$ is incident with at least one 3-vertex, then $f$ sends at least $\frac{5}{12}$ to each of its incident true $4^-$-vertices;

(2) If $f$ is not incident with any 3-vertex and $f$ is incident with at least one true 4-vertex, then  $f$ sends at least $\frac{1}{3}$ to each of its incident true $4$-vertices.
\end{clm}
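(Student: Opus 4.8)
The plan is to bound the charge $c''(f)$ that $f$ holds just before R7 and then divide by the number $n$ of incident true $4^-$-vertices, since for a $4$-face R7 is the only rule distributing charge to true $4^-$-vertices. First I would record the structural facts I will reuse. By Lemma \ref{n.adj}(a) and the hypothesis, $f$ carries at most one false vertex. Because $G$ has only edges of type $(3,\geq24)$, $(4,\geq12)$, $(5,\geq10)$, $(6,\geq9)$, $(\geq7,\geq8)$, every edge with an endpoint of degree $\leq 4$ has its other end of degree $\geq 12$, and $\geq 24$ if the small end is a $3$-vertex. Hence no two true $4^-$-vertices on $f$ are adjacent, so they form an independent set on the $4$-cycle and $n\leq 2$; moreover every true $f$-neighbor of a true $4^-$-vertex is a $12^+$-vertex (a $24^+$-vertex if the $4^-$-vertex is a $3$-vertex), feeding $f$ at least $\frac{2}{3}$ (resp.\ $\frac{5}{6}$) through R5. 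Finally, since R1--R4 and R8 never send charge to a $4$-face, I note that $c''(f)$ equals the R5 intake plus the net of R6, so the task reduces to bounding these.

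Next I would treat $n=1$, with $v$ the unique true $4^-$-vertex, splitting on the false vertex. If $f$ has none, the two $f$-neighbors of $v$ are $12^+$-vertices and $c''(f)\geq \frac{4}{3}$. If $f$ has a false vertex $x$ and $v$ is adjacent to $x$, then $x$ has a corner of degree $\leq 4<9$, so $f$ sends nothing via R6 (and at worst receives more), while the vertex opposite $x$ is a $12^+$-neighbor of $v$ giving $\geq\frac{2}{3}$; thus $c''(f)\geq\frac{2}{3}$. The one delicate sub-case is $v$ opposite $x$: now both $f$-neighbors $b,d$ of $x$ are $12^+$-vertices, so $x$ is transitive and $f$ may lose charge through R6. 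Here I would invoke the per-vertex estimate established inside the proof of Claim \ref{clm-new}: $f$ sends $\pi^-(x)\leq \frac{1}{2}\pi^+(x)$, where $\pi^+(x)=\frac{d(b)-4}{d(b)}+\frac{d(d)-4}{d(d)}\geq\frac{4}{3}$ is exactly the R5 intake from $b,d$; as $f$ exchanges no other charge, $c''(f)=\pi^+(x)-\pi^-(x)\geq\frac{1}{2}\pi^+(x)\geq\frac{2}{3}$. In all three sub-cases $v$ receives $\geq\frac{2}{3}\geq\frac{5}{12}$, which covers both (1) and (2).

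For $n=2$, the two $4^-$-vertices $p,q$ are opposite on $f$. If $f$ has no false vertex, the remaining two vertices are common neighbors of $p$ and $q$, hence $12^+$-vertices, so $c''(f)\geq\frac{4}{3}$ and each of $p,q$ receives $\geq\frac{2}{3}$, exceeding both target bounds. If $f$ has a false vertex $x$, then $x$ and the fourth vertex $c$ form the other opposite pair, so $p,q$ are forced to be the two $f$-neighbors of $x$ and $c$ is their common neighbor; since both corners of $x$ have degree $\leq 4$, $f$ neither sends nor receives via R6, whence $c''(f)$ is just $c$'s R5 contribution. This is $\geq\frac{2}{3}$ in general and $\geq\frac{5}{6}$ when a $3$-vertex is present (which forces $d(c)\geq 24$), so dividing by $2$ gives $\geq\frac{1}{3}$ for (2) and $\geq\frac{5}{12}$ for (1); both are tight, which explains the constants in the statement.

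The main obstacle is the single sub-case of $n=1$ in which $v$ lies opposite the false vertex, because there $f$ genuinely ships charge out through R6, so one cannot argue by merely accumulating incoming charge. The point that makes the estimate survive is that the two high-degree vertices driving the R6 transfer are precisely the two that feed $f$ via R5, and the R5 intake (at least $\frac{4}{3}$) always dominates twice the R6 outflow; this is exactly the inequality $\pi^+\geq 2\pi^-$ proved for Claim \ref{clm-new}. Every other configuration reduces to checking that $f$ only receives charge, after which the bounds follow by counting the guaranteed $12^+$- or $24^+$-neighbors.
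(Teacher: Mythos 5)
Your proposal is correct and follows essentially the same route as the paper: both arguments rest on the forced $12^+$/$24^+$-degree $G$-neighbors of the small vertices feeding $f$ via R5, the observation that a false vertex whose $f$-neighbor is a true $4^-$-vertex cannot be transitive (so R6 outflow occurs only when the false vertex is opposite the small vertex), and then dividing the net charge by the at most two incident true $4^-$-vertices via R7. Your only cosmetic departure is organizing the cases by the number $n$ of true $4^-$-vertices and quoting the inequality $\pi^+(x)\geq 2\pi^-(x)$ from the proof of Claim \ref{clm-new} where the paper instead plugs in the explicit R6.1/R6.2 outflow bound $2\times\frac{1}{3}$, which is the same estimate.
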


\begin{proof}
(1) Let $f=\{v_1v_2v_3v_4\}$ be such a 4-face so that $d_{\gx}(v_1)=3$. Since $v_2$ and $v_4$ cannot be both false, at least one of them is a $24^+$-vertex, and moreover, neither $v_2$ nor $v_4$ can be a transitive false vertex.
If $v_3$ is true (so may be a true $4^-$-vertex), then $f$ sends to each of its incident true $4^-$-vertices at least $\frac{1}{2}\times (4-4+\frac{5}{6})=\frac{5}{12}$ by R5 and R7.
If $v_3$ is false, then it is a transitive false vertex because $v_2$ and $v_4$ are $24^+$-vertices.
So $f$ sends to each of its incident true $4^-$-vertices at least $4-4+2\times\frac{5}{6}-2\times\frac{1}{3}=1$ by R5, R6.1 and R7.

(2) Let $f=\{v_1v_2v_3v_4\}$ be such a 4-face so that $v_1$ is a true 4-vertex. Since $v_2$ and $v_4$ cannot be both false, at least one of them is a $12^+$-vertex, and moreover, neither $v_2$ nor $v_4$ can be a transitive false vertex.

If $v_3$ is true (so may be a true $4$-vertex), then $f$ sends to each of its incident true $4$-vertices at least $\frac{1}{2}\times (4-4+2\times\frac{1}{3})=\frac{1}{3}$ by R5 and R7.
If $v_3$ is false, then it is a transitive false vertex because $v_2$ and $v_4$ are $12^+$-vertices.
So $f$ sends to each of its incident true $4^-$-vertices at least $4-4+2\times\frac{2}{3}-\textcolor[rgb]{1.00,0.00,0.00}{2\times\frac{1}{3}}=\frac{2}{3}$ by R5, R6.1, R6.2 and R7.
\end{proof}

\begin{clm}\label{clm3}
Every $5^{+}$-face incident with true $4^{-}$-vertices sends at least $\frac{1}{3}$ to each of its incident true 4-vertices (if exist).
\end{clm}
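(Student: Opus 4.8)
The plan is to follow the charge on $f$ through the rules and reduce the claim to a single inequality. Write $d:=d_{\gx}(f)\ge 5$ and let $n_3,n_4$ be the numbers of $3$-vertices and true $4$-vertices incident with $f$; we may assume $n_4\ge 1$, as otherwise the statement is vacuous. Let $R$ be the charge on $f$ immediately before R8 hands charge to its true $4$-vertices, that is, after $f$ has received from its incident $8^{+}$-vertices (R5) and through its incident false vertices (R6), has paid out $\rho^{-}(f)$ through its incident transitive false vertices (R6), and has paid $\tfrac23$ to each incident $3$-vertex (R8). Since R8 then splits $R$ equally among the $n_4$ true $4$-vertices, it suffices to prove $R\ge\tfrac13 n_4$. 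Writing $\rho^{+}(f)$ as in Claim~\ref{clm-new} and collecting into $\sigma\ge0$ the charge $f$ gets from incident $8$-vertices and the charge it receives through incident false vertices, the target reads
\[
(d-4)+\bigl(\rho^{+}(f)-\rho^{-}(f)\bigr)+\sigma-\tfrac23 n_3\ \ge\ \tfrac13 n_4 .
\]

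First I would neutralise the false-vertex transfers. Applying Claim~\ref{clm-new} with $d_{\gx}(f)\ge5\ge4$ gives $\rho^{+}(f)\ge\rho^{-}(f)$, so that term is non-negative and, together with $\sigma\ge0$, may be dropped; it then suffices to show $(d-4)\ge\tfrac23 n_3+\tfrac13 n_4$, with any deficit to be recovered from the high-degree neighbours of the low-degree vertices. Distributing the face charge as $\tfrac{d-4}{d}$ per incident vertex, a $3$-vertex carries base $\tfrac{d-4}{d}$ against a demand $\tfrac23$ and a true $4$-vertex carries $\tfrac{d-4}{d}$ against a demand $\tfrac13$. For $d\ge12$ one has $\tfrac{d-4}{d}\ge\tfrac23$, so the base charge alone meets both demands vertex by vertex and the inequality holds immediately; this disposes of all large faces.

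The substance of the proof is the finite range $5\le d\le11$, and here I would exploit the structural restrictions available. Recall that false vertices are pairwise non-adjacent (Lemma~\ref{n.adj}(a)); that in the counterexample $G$ every $G$-neighbour of a $3$-vertex is a $24^{+}$-vertex and every $G$-neighbour of a $4$-vertex is a $12^{+}$-vertex; and hence that no edge of $\gx$ joins two true $4^{-}$-vertices, so along $f$ any two consecutive low-degree true vertices are separated by a false vertex. Thus each neighbour of a low-degree vertex $t$ along $f$ is either a high-degree true vertex --- sending $f$ at least $\tfrac56$ if it is a $24^{+}$-vertex and at least $\tfrac23$ if it is a $12^{+}$-vertex by R5, which more than covers the demand of $t$ --- or a false vertex, across which the opposite high-degree face routes charge to both $t$ and $f$ by R6.1 while, its two neighbours on $f$ being of degree at most $6$, it is not transitive and so contributes nothing to $\rho^{-}(f)$. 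I would then range over the possible cyclic arrangements of incident vertices for each $d\in\{5,\dots,11\}$, in every case matching each deficient low-degree vertex either with the R5 surplus of an adjacent high-degree vertex or with the R6 charge arriving through its flanking false vertices.

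The step I expect to be the main obstacle is exactly this small-face bookkeeping. A single high-degree incident vertex may be a neighbour of low-degree vertices on both sides, and the R5 surplus of a $24^{+}$-vertex must simultaneously fund a $3$-vertex's $\tfrac23$, any charge exported across a flanking false vertex by R6, and $f$'s own balance; the argument therefore has to be organised so that no unit of high-degree charge is spent twice. Keeping this accounting honest --- rather than verifying any single inequality --- is the delicate part, and it is exactly what the uniform ``$\tfrac{d-4}{d}$ per vertex, topped up by adjacent R5/R6 surplus'' scheme is designed to control, checked arrangement by arrangement over the finitely many small face sizes.
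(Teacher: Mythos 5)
Your setup is sound and coincides with the paper's first step: you invoke Claim~\ref{clm-new} to discard $\rho^{+}(f)-\rho^{-}(f)\ge 0$, reduce the claim to the inequality $(d-4)\ge\tfrac23 n_3+\tfrac13 n_4$, and your per-vertex distribution correctly settles $d\ge 12$. But the proof stops being a proof exactly where the content of the claim lies: for $5\le d\le 11$ you only announce a plan (``range over the possible cyclic arrangements \dots matching each deficient low-degree vertex \dots'') and you yourself flag the double-counting problem as an unresolved obstacle. No arrangement is actually verified, so all the cases that matter are left open. Moreover, the plan reopens an accounting problem your own reduction had already closed: the R5 charge of $9^{+}$-vertices incident with $f$ is precisely $\rho^{+}(f)$, which you have already spent offsetting $\rho^{-}(f)$ via Claim~\ref{clm-new}, so ``recovering the deficit from the high-degree neighbours'' is exactly the double spending you worry about, and organizing it honestly would require redoing Claim~\ref{clm-new} locally.

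The missing idea is to use the non-adjacency of true $4^{-}$-vertices \emph{quantitatively} rather than qualitatively. You note that no edge of $\gx$ joins two true $4^{-}$-vertices; this immediately gives $n_3+n_4\le\lfloor d/2\rfloor$, since low-degree true vertices cannot be consecutive on the boundary of $f$ (they are separated by \emph{some} vertex, not necessarily a false one as you wrote). Feeding this into your own target inequality finishes everything at once:
\[
(d-4)-\tfrac23 n_3-\tfrac13 n_4\ \ge\ (d-4)-\tfrac23\left(n_3+n_4\right)\ \ge\ (d-4)-\tfrac23\cdot\tfrac{d}{2}\ =\ \tfrac23 d-4\ \ge\ 0
\quad\text{for } d\ge 6,
\]
while for $d=5$ one has $n_3+n_4\le 2$, so either $n_4=2$, $n_3=0$ (each true $4$-vertex receives at least $\tfrac12$) or $n_4=1$, $n_3\le 1$ (the true $4$-vertex receives at least $1-\tfrac23=\tfrac13$). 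This is precisely the paper's proof of Claim~\ref{clm3}: no case analysis over cyclic arrangements and no appeal to R5/R6 surpluses beyond Claim~\ref{clm-new} is needed, and the ``delicate bookkeeping'' you anticipate never arises.
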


\begin{proof}
Suppose that $f$ is incident with $s$ 3-vertices, and $t$ true 4-vertices.
If $t=0$, then there is noting to be proved, so we may assume that $t\geq 1$.
Since true $4^-$-vertices are not adjacent in $\gx$, $s+t\leq\lfloor\frac{d_{\gx}(f)}{2}\rfloor$. By R8 and Claim \ref{clm-new},
$f$ sends to each of its incident true 4-vertices at least

\begin{align*}
   \frac{d_{\gx}(f)-4+\rho^+(f)-\rho^-(f)-\frac{2}{3}s}{t}&\geq \frac{d_{\gx}(f)-4-\frac{2}{3}s}{t}\\
  &\geq\frac{d_{\gx}(f)-4+\frac{2}{3}t-\frac{2}{3}\lfloor\frac{d_{\gx}(f)}{2}\rfloor}{t}\\
  &\geq\frac{d_{\gx}(f)-4+\frac{2}{3}t-\frac{2}{3}\cdot\frac{d_{\gx}(f)}{2}}{t}\\
  &=\frac{\frac{2}{3}d_{\gx}(f)-4+\frac{2}{3}t}{t},
\end{align*}
which is at least $\frac{2}{3}$ provided that $d_{\gx}(f)\geq 6$, and at least $\frac{1}{3}$  provided $d_{\gx}(f)=5$ and $t\geq 2$ (actually, if $d_{\gx}(f)=5$ then $t\leq 2$ since $s+t\leq\lfloor\frac{5}{2}\rfloor=2$).

If $d_{\gx}(f)=5$ and $t=1$, then by $s+t\leq2$, we have $s\leq 1$. So $f$ sends to its incident 4-vertex at least $5-4-\frac{2}{3}=\frac{1}{3}$ by R8 and Claim \ref{clm-new}.
\end{proof}









\begin{prop}
After the application of Rules, the charge of every face of $\gx$ is non-negative.
\end{prop}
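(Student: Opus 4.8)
The plan is to verify $c'(f)\geq 0$ by a case analysis on $d_{\gx}(f)$, handling the abundant $5^+$-faces and $4$-faces first (where the charge is non-negative or nearly so from the start) and reserving the genuinely tight $3$-faces for last. For a face $f$ with $d_{\gx}(f)\geq 5$, its initial charge $d_{\gx}(f)-4$ is already positive; by Rule~R8 it sends out $\frac{2}{3}$ per incident $3$-vertex and redistributes the rest, so I would invoke Claim~\ref{clm-new} (which guarantees $\rho^+(f)\geq\rho^-(f)$) together with the adjacency bound $s+t\leq\lfloor d_{\gx}(f)/2\rfloor$ on the number of $3$- and true $4$-vertices to show the leftover is non-negative — this is essentially the computation packaged in Claim~\ref{clm3}. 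For $4$-faces, I would split on how many false vertices $f$ carries: if $f$ has two false vertices then (by Lemma~\ref{n.adj}(a)) its other two vertices are the endpoints of the two crossing edges and one checks the net flow through R6 directly; otherwise Claim~\ref{new-2} and Claim~\ref{clm-new} control the situation.

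The main work, and the expected obstacle, is the $3$-faces, since these start with charge $-1$ and must be brought up to $0$ by the combined action of R1--R8. Here I would first separate \emph{true} $3$-faces from \emph{false} ones. A true $3$-face has three true incident vertices, and since the forbidden edge-types force the neighbors of any small vertex to be large, Claims~\ref{clm1} and~\ref{clm2} show that each incident $3$- or $4$-vertex is compensated; combined with the charge such a face receives from its $5^+$-vertices via R2--R5 and sends out via R7, one verifies the balance reaches $0$. For a false $3$-face $f=\{vv_1v_2\}$ with $v$ the false vertex, the governing quantity is the contribution $\pi^+(v)$ against the demand $\pi^-(v)$ made precise in Claim~\ref{clm-new}: when $v$ is transitive, $\rho^+(f)\geq\rho^-(f)+1$ already supplies the missing unit, so $c'(f)\geq 0$.

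The delicate subcase is a false $3$-face $f=\{vv_1v_2\}$ whose false vertex $v$ is \emph{not} transitive, so at least one of $v_1,v_2$ is small ($\leq 8$) and R6 sends nothing through $v$. I would then argue that $f$ instead collects enough charge from $v_1$ and $v_2$ themselves through R1--R5, crucially using the definition of a $k$-special face: the forbidden-type hypothesis on $G$ guarantees that when one endpoint of the crossing is a $4$-, $5$-, or $6$-vertex, the opposite endpoint $v_4$ is large enough to make $f$ $k$-special, which is exactly the trigger for the larger payments $\frac{1}{6},\frac{3}{10},\frac{7}{18}$ in R1--R3. I would organize this by $\min\{d_{\gx}(v_1),d_{\gx}(v_2)\}$, checking in each range that the special-face payments from $v_1,v_2$ plus any R6 contribution from a transitive neighbour on the far side total at least $1$. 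Lemma~\ref{n.adj}(b),(c),(d) are used to rule out the degenerate configurations (e.g.\ a $3$-vertex adjacent to two false vertices on two $3$-faces, or a true $4$-vertex on four false $3$-faces) that would otherwise break these estimates.

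Finally I would collect the cases: for every face type the post-discharging charge is shown to be at least $0$, which (together with the companion proposition for vertices, handled separately) contradicts $\sum_x c'(x)<0$ and establishes the theorem. The only place I anticipate real friction is bookkeeping the non-transitive false $3$-faces, where the $k$-special definition and the edge-type restrictions must be threaded together carefully; the rest is a disciplined but routine accounting of the rules.
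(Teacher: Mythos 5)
Your high-level decomposition is the same as the paper's: Claim \ref{clm-new} together with R7/R8 disposes of all $4^+$-faces and of $3$-faces carrying a transitive false vertex; true $3$-faces are handled by the R5 payments that the forbidden edge types extract from the two large vertices (your appeal to Claims \ref{clm1} and \ref{clm2} here is slightly circular, since those claims describe what such a face \emph{sends} to its small vertex, but the underlying computation is identical); and the residual case is a false $3$-face whose false vertex is non-transitive, organized by $\min\{d_{\gx}(v_1),d_{\gx}(v_2)\}$. Up to this point you are reconstructing the paper's proof.

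The genuine gap is in that residual case, exactly where you predicted friction. You assert that ``the forbidden-type hypothesis on $G$ guarantees that when one endpoint of the crossing is a $4$-, $5$-, or $6$-vertex, the opposite endpoint $v_4$ is large enough to make $f$ $k$-special.'' This inverts the definition twice. First, $k$-specialness of $f=\{vv_1v_2\}$ (with $v_1v_3$ crossing $v_2v_4$ at $v$ and $d_{\gx}(v_1)=k$) requires $v_2v_3\in E(G)$ and $d_{\gx}(v_4)$ \emph{small} (at most $11$, $9$, $8$ for $k=4,5,6$), not large. Second, the forbidden edge types make $v_2$ and $v_3$ large (both are neighbors of $v_1$ in $G$: $v_2$ via the face edge, $v_3$ via the crossed edge); they impose nothing on $v_4$, which can even have degree $3$. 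Consequently your implicit conclusion that every such face is special is false, and the special/non-special dichotomy you would thereby erase is the crux of the accounting. When $f$ is special, the face $f_2$ across $vv_2$ may be a $3$-face forced by R6.2--R6.4 to split its donation, passing only $\frac{1}{6}$ (resp.\ $\frac{1}{10}$, $\frac{1}{18}$) through $v$, and the larger R1--R3 payment from $v_1$ makes up the difference (for $k=4$: $\frac{1}{6}+\frac{2}{3}+\frac{1}{6}=1$). When $f$ is \emph{not} special, $v_1$ pays nothing (for $k=4$) or only the smaller amount (for $k=5,6$), but then either $d_{\gx}(v_4)\geq 7$ or $v_2v_3\notin E(G)$, so $f_2$ is either a $3$-face in the ``$d_{\gx}(v_4)\geq 7$'' branch or a $4^+$-face, and R6.2--R6.4 then deliver the full $\frac{1}{3}$ (resp.\ $\frac{1}{5}$, $\frac{1}{9}$), again closing the balance. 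Under your reading, a non-special face with $k=4$ collects only $\frac{2}{3}$ from $v_2$ by R5 plus the $\frac{1}{6}$ you budget through $v$, leaving it $\frac{1}{6}$ short of the needed $1$. So the plan as written does not close; it must be repaired by restoring the paper's case split on specialness within each degree class $k\in\{4,5,6\}$, alongside the easy extremes $d_{\gx}(v_1)=3$ and $d_{\gx}(v_1)\geq 7$.
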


\begin{proof}
 Claim \ref{clm-new} along with R7 and R8 deduce that $c'(f)\geq 0$ for each $4^+$-face $f$ and each $3$-face that is incident with a transitive false vertex.
Now we calculate the final charges of true 3-faces and  false 3-faces incident with one non-transitive false vertex.

First of all, we assume that $f_{1}=\{vv_{1}v_{2}\}$ is a true 3-face with $d_{\gx}(v)\leq d_{\gx}(v_{1})\leq d_{\gx}(v_{2})$.

 If  $v$ is a 3-vertex, then $v_{1}$ and $v_{2}$ are $24^{+}$-vertices, thus the charge of $f_1$ is at least $3-4+2\times\frac{5}{6}>0$ after applying R1--R6. Therefore, $c'(f)=0$ by R7.

 If  $v$ is a 4-vertex, then $v_{1}$ and $v_{2}$ are $12^{+}$-vertices, thus the charge of $f_1$ is at least $3-4+2\times\frac{2}{3}>0$ after applying R1--R6. Therefore, $c'(f)=0$ by R7.

 If $v$ is a 5-vertex, then $v_{1}$ and $v_{2}$ are $10^{+}$-vertices, thus $c'(f_{1})\geq3-4+2\times\frac{3}{5}>0$ by R5.

 If $v$ is a 6-vertex, then $v_{1}$ and $v_{2}$ are $9^{+}$-vertices, thus $c'(f_{1})\geq3-4+2\times\frac{5}{9}>0$ by R5.

 If $v$ is a $7^{+}$-vertex, then $v_{1}$ and $v_{2}$ are $8^{+}$-vertices, thus $c'(f_{1})\geq3-4+2\times\frac{1}{2}=0$ by R5.

On the other hand, assume that $f_{1}=\{vv_{1}v_{2}\}$ is a false face so that $v$ is a non-transitive false vertex and $d_{\gx}(v_{1})\leq d_{\gx}(v_{2})$.
Suppose that $v_{1}v_{3}$ crosses $v_{2}v_{4}$ in $G$ at $v$, and $f_{i}$ with $1\leq i \leq4$ is the face that is incident with $vv_{i}$ and $vv_{i+1}$ in $\gx$ (here $v_{5}$ is recognized as $v_{1}$).

If $d_{\gx}(v_{1})=3$, then $v_2,v_3$ are $24^+$-vertices. By R5, $v_2$ sends at least $\frac{5}{6}$ to $f_1$, and  by R6.1, $f_2$ sends at least $\frac{1}{6}$ to $f_1$. Therefore,
$c'(f_{1})\geq3-4+\frac{5}{6}+\frac{1}{6}=0$.

If $d_{\gx}(v_{1})=4$, then $v_2,v_3$ are $12^+$-vertices. If $f_{1}$ is a 4-special face, then $f$ receives $\frac{1}{6}$ from $v_{1}$ by R1, $\frac{2}{3}$ from $v_{2}$ by R5, at least $\frac{1}{6}$ from $f_{2}$ by R6.2, and thus $c'(f_{1})\geq3-4+\frac{2}{3}+\frac{1}{6}+\frac{1}{6}=0$.
If $f_{1}$ is not a 4-special face, then $f_{1}$ receives $\frac{2}{3}$ from $v_{2}$ by R5, $\frac{1}{3}$ from $f_{2}$ by R6.2, and thus $c'(f_{1})\geq3-4+\frac{2}{3}+\frac{1}{3}=0$.

If $d_{\gx}(v_{1})=5$, then $v_2,v_3$ are $10^+$-vertices. If $f_{1}$ is a 5-special face, then $f_{1}$ receives $\frac{3}{10}$ from $v_{1}$ by R2, $\frac{3}{5}$ from $v_{2}$ by R5, at least $\frac{1}{10}$ from $f_{2}$ by R6.3, and thus $c'(f_{1})\geq3-4+\frac{3}{10}+\frac{3}{5}+\frac{1}{10}=0$. If $f_1$ is not a 5-special face, then $f_{1}$ receives $\frac{1}{5}$ from $v_{1}$ by R2, $\frac{3}{5}$ from $v_{2}$ by R5, at least $\min\{\frac{1}{5},\frac{3}{10}\}=\frac{1}{5}$ from $f_{2}$ by R6.3, and thus $c'(f_{1})\geq3-4+\frac{1}{5}+\frac{3}{5}+\frac{1}{5}\geq0$.

If $d_{\gx}(v_{1})=6$, then $v_2,v_3$ are $9^+$-vertices. If $f_{1}$ is a 6-special face, then $f_{1}$ receives $\frac{7}{18}$ from $v_{1}$ by R3, $\frac{5}{9}$ from $v_{2}$ by R5, at least $\frac{1}{18}$ from $f_{2}$ by R6.4, and thus $c'(f_{1})\geq3-4+\frac{7}{18}+\frac{5}{9}+\frac{1}{18}=0$. If $f$ is not a 6-special face, then $f_{1}$ receives $\frac{1}{3}$ from $v_{1}$ by R3, $\frac{5}{9}$ from $v_{2}$ by R5, at least $\min\{\frac{1}{9},\frac{5}{18}\}=\frac{1}{9}$ from $f_{2}$ by R6.4, and thus $c'(f_{1})\geq3-4+\frac{1}{3}+\frac{5}{9}+\frac{1}{9}\geq0$.

If $d_{\gx}(v_{1})\geq 7$, then $v_{2}$ is a $8^{+}$-vertex. By R4 and R5, each of $v_1$ and $v_2$ sends at least $\frac{1}{2}$ to $f_1$, which implies $c'(f)\geq3-4+\frac{1}{2}+\frac{1}{2}=0$.
\end{proof}

For a true $k$-vertex $v$ of $\gx$, denote by $v_1,v_2,\cdots,v_k$  the neighbors of $v$ in $\gx$ that lie consecutively around $v$, and by $f_{i}$ the face that is incident with $vv_{i}$ and $vv_{i+1}$ in $\gx$ (the subscript is taken by modular $k$). These notations will be used in the proof of the next propositions without explaining their meanings again.

\begin{prop}
After the application of Rules, the charge of every 3-vertex of $\gx$ is non-negative.
\end{prop}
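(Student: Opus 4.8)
The plan is to show that a $3$-vertex $u$ of $\gx$, whose initial charge is $3-4=-1$, receives at least $1$ unit of charge in total. Since $u$ is a true vertex of degree $3$ in $G$ and $G$ carries only edges of type $(3,\geq 24)$ at a $3$-vertex, every $G$-neighbour of $u$ is a $24^{+}$-vertex; hence in $\gx$ each of the three neighbours of $u$ is either a $24^{+}$-vertex (uncrossed edge) or a false vertex (crossed edge). First I would record a uniform lower bound for the charge each incident face sends to $u$: a $5^{+}$-face sends $\frac{2}{3}$ by R8, a true $3$-face sends at least $\frac{2}{3}$ by Claim \ref{clm1}, and a $4$-face not incident with two false vertices sends at least $\frac{5}{12}$ by Claim \ref{new-2}(1). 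Each of these exceeds $\frac{1}{3}$, so if all three faces at $u$ are of these types the total already reaches $1$ and we are done.

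The only faces that may send less than $\frac{1}{3}$ are false $3$-faces and $4$-faces incident with two false vertices, and both are tied to false neighbours of $u$. The central fact I would establish is a routing estimate: if $\{u,x,w\}$ is a false $3$-face with $x$ false, then by Lemma \ref{n.adj}(a) the third vertex $w$ is true, and since $uw$ is an uncrossed edge of $G$ we get $d_{\gx}(w)\geq 24$; moreover $w$ is cyclically adjacent to $u$ around $x$ and lies on the crossing edge not through $u$, while the vertex opposite $u$ at $x$ is a $24^{+}$-vertex as well. Choosing the labelling of the crossing at $x$ so that $u$ plays the role of $v_{3}$ and $w$ that of $v_{2}$, Rule R6.1 applies and sends at least $\frac{1}{6}$ directly to $u$ through $x$, and in fact $\frac{1}{3}$ whenever the fourth neighbour $v_{4}$ of $x$ has degree at least $4$. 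Thus every false $3$-face at $u$ contributes at least $\frac{1}{6}$, with the $\frac{1}{6}$-versus-$\frac{1}{3}$ dichotomy governed by the degree of a specific opposite vertex.

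With these estimates in hand I would run a case analysis on the number $t$ of $3$-faces incident with $u$. When $t=3$, Lemma \ref{n.adj}(c) forbids any false neighbour (its edge would lie on two $3$-faces), so all three faces are true $3$-faces and $u$ receives $2$. When $t=2$ and both $3$-faces are false, $u$ has two false neighbours and two $3$-faces, so Lemma \ref{n.adj}(b) forces the remaining face to be a $5^{+}$-face, and $\frac{2}{3}+\frac{1}{6}+\frac{1}{6}=1$. The mixed and smaller-$t$ cases are handled the same way, combining the per-face bounds above with the R6.1 routing: whenever an ordinary face already supplies $\frac{1}{3}$ the count closes immediately, and otherwise the shortfall is covered by the $\frac{1}{6}$ contributions routed through the false neighbours.

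The main obstacle is the $4$-face incident with two false vertices, for which Claim \ref{new-2} gives nothing and the charge it can forward to $u$ may be exactly $0$ (its own charge is non-negative by Claim \ref{clm-new} and R7, but need not be positive). Here I would route charge through \emph{both} false vertices $x_{1},x_{2}$ of such a face $\{u,x_{1},w,x_{2}\}$. The delicate point is that whether R6.1 sends $\frac{1}{6}$ or $\frac{1}{3}$ through each $x_{i}$ depends on the degree of an opposite vertex, and the worst case is when the vertex $w$ opposite $u$ on the $4$-face is itself a $3$-vertex; but then $w$ is a true $3$-vertex, so its own crossing-partner at $x_{2}$ is a $24^{+}$-vertex, which is precisely the side vertex ($v_{2}$) needed to validate the hypothesis of R6.1 at $x_{2}$. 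Tracking these tight balances — each reducing to $\frac{2}{3}+\frac{1}{6}+\frac{1}{6}=1$ or $\frac{2}{3}+\frac{1}{3}=1$ — across all sub-configurations, together with the analogous bookkeeping when $u$ has three false neighbours (where Lemma \ref{n.adj}(c) forces at least two $4^{+}$-faces), is the delicate part of the argument.
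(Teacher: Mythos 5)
Your framework (showing the $3$-vertex $u$ with initial charge $-1$ recovers at least $1$), your per-face lower bounds, and your R6.1 routing estimate for \emph{false $3$-faces} are all sound and match the paper: for a false $3$-face $\{u,x,w\}$ the third vertex $w$ is a true $G$-neighbour of $u$, hence a $24^+$-vertex lying on the second crossing edge at $x$, so R6.1 genuinely fires. The gap is in your treatment of a $4$-face $f=\{u,x_1,w,x_2\}$ with two false vertices, and you have identified the wrong worst case. The hard case is not $d_{\gx}(w)=3$ but $4\leq d_{\gx}(w)\leq 7$. There, $w$'s crossing partners $c_1,c_2$ (with $wc_i\in E(G)$ crossing $u$'s edges at $x_i$) are only forced to have degree at least $12$, $10$, $9$ or $8$ respectively; they need \emph{not} be $24^+$-vertices. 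Since R6.1 is the only R6 subrule that sends charge through a crossing to a \emph{vertex} (R6.2--R6.4 send charge only to faces, and none of them sends anything to the face $f$ either), $u$ may receive nothing at all through $x_1$ or $x_2$. Moreover $f$ itself can finish with charge exactly $0$: a $7^-$-vertex $w$ sends nothing to a $4$-face under R1--R4, so R7 delivers nothing. Concretely, take the paper's case of $u$ incident with a false $3$-face, a $4$-face with one false vertex, and $f_3=\{u,v_1,u_3,v_3\}$ with $d_{\gx}(u_3)\in[4,7]$: your accounting gives at most $\frac{1}{3}+\frac{5}{12}+0=\frac{3}{4}<1$, so the proof fails exactly where it is most delicate.

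The missing idea --- which is how the paper closes these cases --- is that the opposite vertices of consecutive faces around $u$ are \emph{adjacent in $G$}: at a false neighbour $x_i$ of $u$, the second crossing edge joins the opposite vertex of one face at $u$ to the opposite vertex of the next one (giving $u_2u_3\in E(G)$, or a triangle $u_1u_2u_3$ in $G$ when all three neighbours of $u$ are false). Because the counterexample $G$ contains no edge joining two $7^-$-vertices, whenever one opposite vertex has degree in $[4,7]$ its partner is an $8^+$-vertex. That $8^+$-vertex pays at least $\frac{1}{2}$ to the corresponding face by R5; that face has $u$ as its only incident true $4^-$-vertex and its false vertices are not transitive (their neighbour $u$ on the face has degree $3$), so R7 passes this charge --- often together with $\frac{5}{6}$ from a $24^+$-vertex on the same face --- directly to $u$. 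It is this R5-plus-R7 mechanism through \emph{neighbouring} faces, not R6.1 routing through the crossings of $f$ itself, that covers the shortfall; without it your case analysis cannot be completed.
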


\begin{proof}

Suppose that $v$ is a 3-vertex of $\gx$.

(1) If $v$ is incident only with $3$-faces, then they are all true for otherwise $\gx$ has two adjacent false vertices or $G$ has a multi-edge. By Claim \ref{clm1}, $c'(v)\geq3-4+3\times\frac{2}{3}>0$.

(2) If $v$ is incident with one $4^{+}$-face, say $f_3$, and two 3-faces $f_1$ and $f_2$, then we consider the following subcases.

First, if $f_1$ and $f_2$ are true, then $c'(v)\geq3-4+2\times\frac{2}{3}>0$ by Claim \ref{clm1}.

Second, if only one of $f_1$ and $f_2$ is true, then by the symmetry, assume that $f_1$ is true. Therefore, $f_2$ is false and thus $v_3$ is a false vertex. If $f_3$ is a $5^+$-face, then it sends $\frac{2}{3}$ to $v$ by R8. By Claim \ref{clm1}, $f_1$ sends $\frac{2}{3}$ to $v$. Hence $c'(v)\geq 3-4+\frac{2}{3}+\frac{2}{3}>0$. On the other hand, if $f_3$ is a 4-face, then it is incident with only one false vertex $v_3$, and moreover, $v_3$ is not a transitive false vertex. Since $v_1$ is a $24^+$-vertex, $f_3$ send by R7 to $v$ at least $\frac{1}{2}\times (4-4+\frac{5}{6})=\frac{5}{12}$. Counting together the charge $\frac{2}{3}$ that $f_1$ sends to $v$ by  Claim \ref{clm1}, we conclude $c'(v)\geq 3-4+\frac{5}{12}+\frac{2}{3}>0$.

Third, if $f_1$ and $f_2$ are both false, then both $v_1$ and $v_3$ are false by Lemma \ref{n.adj}(c), and furthermore, $f_3$ is a $5^+$-face by  Lemma \ref{n.adj}(b), which sends $\frac{2}{3}$ to $v$ by R8. The face adjacent to $f_1$ in $\gx$ that is different from $f_2,f_3$ is denoted by $h_1$, and the face adjacent to $f_2$ in $\gx$ that is different from $f_1,f_3$ is denoted by $h_2$. By R6.1, each of $h_1$ and $h_2$ sends at least $\frac{1}{6}$ to $v$. Therefore, $c'(v)\geq3-4+\frac{2}{3}+2\times\frac{1}{6}\geq0$.

(3) If $v$ is incident with one $3$-face, say $f_1$, and two $4^+$-faces $f_2$ and $f_3$, then we consider the following subcases.

First, if $f_1$ is true, then it sends $\frac{2}{3}$ to $v$ by Claim \ref{clm1}. If $f_2$ or $f_3$, say $f_2$, is a $5^+$-face, then $v$ receives $\frac{2}{3}$ from $f_2$ by R8, which implies $c'(v)\geq3-4+\frac{2}{3}+\frac{2}{3}>0$. If $f_2$ and $f_3$ are both $4$-faces, then each of them is incident with at most one false vertex. Hence
by Claim \ref{new-2}(1), each of $f_2$ and $f_3$ sends at least $\frac{5}{12}$ to $v$, which implies $c'(v)\geq 3-4+\frac{2}{3}+2\times\frac{5}{12}>0$.

Second, if $f_1$ is false, then assume by the symmetry that $v_1$ is false. The face adjacent to $f_1$ in $\gx$ that is different from $f_2,f_3$ is denoted by $h_1$. By R6.1, $h_1$ sends at least $\frac{1}{6}$ to $v$.

If $v_3$ is true, then $f_3$ is either a 4-face that is not incident with two false vertices or a $5^+$-face, and so does $f_2$. By Claim \ref{new-2}(1) and R8, each of $f_2$ and $f_3$ sends at least $\min\{\frac{5}{12},\frac{2}{3}\}=\frac{5}{12}$ to $v$, which implies $c'(v)\geq 3-4+\frac{1}{6}+2\times\frac{5}{12}=0$.

If $v_3$ is false, then $f_2$ still sends at least $\frac{5}{12}$ to $v$ by the same reason as above. At this stage, if $f_3$ is a $5^+$-face, then it sends $\frac{2}{3}$ to $v$ by R8, and thus $c'(v)\geq 3-4+\frac{1}{6}+\frac{5}{12}+\frac{2}{3}>0$. Hence we assume that $f_3$ is a 4-face and let $f_3=\{vv_1u_3v_3\}$.

If $d_{\gx}(u_3)\geq 4$, then $h_1$ sends $\frac{1}{3}$ to $v$ by R6.1. If $f_2$ is a $5^+$-face now, then it sends $\frac{2}{3}$ to $v$ by R8, which implies $v'(v)\geq 3-4+\frac{1}{3}+\frac{2}{3}=0$. If $f_2$ is a $4$-face, then let $f_2=\{vv_2u_2v_3\}$. Since $u_2u_3\in E(G)$, $u_2$ or $u_3$ is a $8^+$-vertex. If $u_2$ is a $8^+$-vertex, then $f_2$ sends to $v$ at least $\frac{1}{2}+\frac{5}{6}=\frac{4}{3}$ by R5 \textcolor[rgb]{1.00,0.00,0.00}{and R7} and thus $c'(v)\geq 3-4+\frac{1}{3}+\frac{4}{3}>0$. If $u_3$ is a $8^+$-vertex, then $f_3$ sends to $v$ at least $\frac{1}{2}$ by R5. By Claim \ref{new-2}(1), $f_2$ sends to $v$ at least $\frac{5}{12}$. Therefore, $c'(v)\geq 3-4+\frac{1}{3}+\frac{1}{2}+\frac{5}{12}>0$. Note that $v_3$ is not a transitive false vertex on $f_2$, and neither $v_1$ nor $v_3$ is a transitive false vertex on $f_3$.

If $d_{\gx}(u_3)=3$, then we look at the degree of $f_2$. If $f_2$ is a $4$-face, then let $f_2=\{vv_2u_2v_3\}$. Since $u_2$ is adjacent to $u_3$ in $G$, $u_2$ is a $24^+$-vertex, which implies that $f_2$ sends to $v$ at least $2\times \frac{5}{6}=\frac{5}{3}$ by R5 and R7. Note that $v_3$ is not a transitive false vertex on $f_2$. Hence $c'(v)\geq 3-4+\frac{5}{3}>0$.  If $f_2$ is a $5^+$-face, then it gives $\frac{2}{3}$ to $v$ by R8. Suppose that the crossing $v_3$ is produced by $vw$ crossing $u_2u_3$ in $G$. Clearly, $w$ and $u_2$ are both $24^+$-vertices. Let $h_2$ be the face in $\gx$ that is incident with $wv_3$ and $u_2v_3$. By R6.1, $h_2$ sends $\frac{1}{6}$ to $v$. Recall that $h_1$ sends at least $\frac{1}{6}$ to $v$. We then have $c'(v)\geq 3-4+\frac{2}{3}+\frac{1}{6}+\frac{1}{6}=0$.

(4) If $v$ is incident only with $4^+$-faces, then we consider the following subcases.

If $v$ is incident with at least two $5^+$-faces, then it is clear that $c'(v)\geq 3-4+2\times\frac{2}{3}>0$ by R8.

If $v$ is incident with one $5^+$-face $f_3$ and two $4$-faces $f_1$ and $f_2$, then $f_3$ sends $\frac{2}{3}$ to $v$ by R8. If $f_1$ or $f_2$, say $f_1$, is incident with at most one false vertex, then by Claim \ref{new-2}(1), $f_1$ sends at least $\frac{5}{12}$ to $v$, which implies $c'(v)\geq 3-4+\frac{2}{3}+\frac{5}{12}>0$. Hence we assume that both $f_1$ and $f_2$ is incident with two false vertices. Let $f_{1}=\{vv_{1}u_{1}v_{2}\}$ and $f_{2}=\{vv_{2}u_{2}v_{3}\}$. We then conclude that $v_1,v_2,v_3$ are all false and $u_1u_2\in E(G)$. Therefore, $u_1$ or $u_2$ is a $8^+$-vertex, since any two $7^-$-vertices are not adjacent in $G$. By the symmetry, assume that $u_1$ is a $8^+$-vertex. By R5 and R7, $f_1$ sends at least $\frac{1}{2}$ to $v$, since neither  $v_1$ nor $v_2$ is a transitive false vertex on $f_1$. Hence $c'(v)\geq 3-4+\frac{2}{3}+\frac{1}{2}>0$.

If $v$ is incident only with $4$-faces, then let $f_{1}=\{vv_{1}u_{1}v_{2}\}$, $f_{2}=\{vv_{2}u_{2}v_{3}\}$ and $f_{3}=\{vv_{3}u_{3}v_{1}\}$. If there is at most one false vertex among $v_1,v_2$ and $v_3$, then each of $f_1,f_2$ and $f_3$ is incident with at most one false vertex. By Claim \ref{new-2}(1), each of them sends at least $\frac{5}{12}$ to $v$, which implies that $c'(v)\geq 3-4+3\times \frac{5}{12}>0$. If $v_1,v_2$ are false and $v_3$ is true, then $v_3$ is a $24^+$-vertex and $u_1u_2,u_1u_3\in E(G)$. If $d_{\gx}(u_1)\geq 8$, then $f_1$ sends at least $\frac{1}{2}$ to $v$ by R5 and R7, since neither $v_1$ nor $v_2$ is a transitive false vertex on $f_1$. Counting together the charge $2\times\frac{5}{12}=\frac{5}{6}$ receiving from $f_2$ and $f_3$ by Claim \ref{new-2}(1), we conclude $c'(v)\geq 3-4+\frac{1}{2}+\frac{5}{6}>0$. On the other hand, if $d_{\gx}(u_1)\leq 7$, then $d_{\gx}(u_2)\geq 8$. Since $v_2$ is not a transitive false vertex on $f_2$, $f_2$ sends at least $\frac{1}{2}+\frac{5}{6}=\frac{4}{3}$ to $v$ by R5 and R7, which immediately implies that $c'(v)\geq 3-4+\frac{4}{3}>0$. At last, we look at the case that $v_1,v_2,v_3$ are all false. This implies that $u_1u_2u_3$ is a triangle in $G$, and then at least two vertices among $u_1,u_2$ and $u_3$, say $u_1$ and $u_2$, are $8^+$-vertices. Since neither $v_1$ nor $v_2$ is a transitive false vertex on $f_1$, $f_1$ sends at least $\frac{1}{2}$ to $v$ by R5 and R7. Same result also holds for $f_2$. Therefore, $c'(f)\geq 3-4+\frac{1}{2}+\frac{1}{2}=0$.
\end{proof}

\begin{prop}
After the application of Rules, the charge of every 4-vertex of $\gx$ is non-negative.
\end{prop}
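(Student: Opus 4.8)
The plan is to handle the two kinds of $4$-vertices separately. A false vertex has initial charge $4-4=0$, and the only rule that ever touches a false vertex is R6, in which it appears purely as a conduit (every quantity there is sent \emph{through} $v$); it never receives charge via R1--R5, R7 or R8 because those rules feed charge into faces or into true vertices only. Hence a false vertex is neither a net source nor a net sink, and its final charge is $0$. It therefore suffices to prove $c'(v)\ge 0$ for a \emph{true} $4$-vertex $v$, whose initial charge is $0$.

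A true $4$-vertex $v$ loses charge only through R1, sending $\frac{1}{6}$ to each incident $4$-special face; it gains charge from its incident faces through R7 and R8, the amounts being governed by Claims \ref{clm2}, \ref{new-2} and \ref{clm3} (a true $3$-face gives $\ge\frac{1}{3}$, a $5^+$-face gives $\ge\frac{1}{3}$, and a $4$-face incident with at most one false vertex gives $\ge\frac{1}{3}$). A $4$-special face is a false $3$-face, and by Lemma \ref{n.adj}(d) the vertex $v$ is incident with at most three false $3$-faces, so a priori $v$ sends out at most $\frac{1}{2}$. I would run a census over the cyclic pattern of the four incident faces $f_1,\dots,f_4$, equivalently over which of the neighbours $v_1,\dots,v_4$ are false, and in each pattern compare the charge sent with the charge guaranteed by the three Claims.

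The decisive structural reduction, which I would isolate as a subclaim, is that far fewer than three incident faces can be $4$-special. If $x$ is a false neighbour of $v$ and \emph{both} faces at the edge $vx$ are $3$-faces $\{v\,x\,c\}$ and $\{v\,x\,c'\}$, then $c$ and $c'$ are the two ends of the edge crossing $vx$ at $x$, and since $vc,vc'\in E(\gx)$ are uncrossed edges of type $(4,\ge 12)$, both $c$ and $c'$ are $12^+$-vertices. But $\{v\,x\,c\}$ being $4$-special forces its fourth arm $c'$ to have degree $\le 11$, a contradiction; symmetrically for $\{v\,x\,c'\}$. Thus a $4$-special face can occur at $x$ only when the other face at $vx$ is a $4^+$-face. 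Fed back into the census this shows, for instance, that in the configuration with three false $3$-faces at most one is $4$-special, so $v$ sends $\le\frac{1}{6}$ and recovers $\ge\frac{1}{3}$ from the remaining $4^+$-face; and in every configuration the total sent out never exceeds what Claims \ref{clm2}, \ref{new-2} and \ref{clm3} return, \emph{except} in the patterns examined next.

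The main obstacle is precisely the configuration containing $4$-faces bounded by two false vertices, say $f_1=\{v\,v_1\,z_1\,v_2\}$ and $f_2=\{v\,v_2\,z_2\,v_3\}$ with $v_1,v_2,v_3$ false; these false vertices are non-transitive (one of their face-neighbours is the degree-$4$ vertex $v$), so no charge leaks out of $f_1,f_2$ through R6, and Claim \ref{new-2} does not apply, so R5 feeds $f_1$ or $f_2$ only if its apex $z_1$ or $z_2$ is an $8^+$-vertex. Here $v$ may send the full $\frac{1}{3}$, and I must rule out its being starved. The resolution mirrors the adjacency argument already used for $3$-vertices: the edge crossing $v\,v_2$ at the shared false vertex $v_2$ has exactly $z_1$ and $z_2$ as its two ends, so $z_1z_2\in E(G)$; since two $7^-$-vertices are never adjacent in $G$, at least one apex, say $z_1$, is an $8^+$-vertex. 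Then $f_1$ receives at least $\frac{1}{2}$ from $z_1$ by R5 and, being a $4$-face whose only incident true $4^-$-vertex is $v$, passes all of it to $v$ by R7, which already covers everything $v$ sends. Combining this estimate with the Claims across all the patterns yields $c'(v)\ge 0$ for every true $4$-vertex, completing the proposition.
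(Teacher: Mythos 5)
Your proposal is correct and takes essentially the same approach as the paper: your structural reduction (a $4$-special face at a false neighbour $x$ forces the face on the other side of $vx$ to be a $4^+$-face, because the far end of the crossing edge would otherwise be a $\gx$-neighbour of $v$ of degree $\geq 12$, contradicting $4$-specialness) is exactly the paper's argument, stated contrapositively, for bounding the charge sent out, and your ``main obstacle'' --- two $4$-faces each with two false vertices sharing a false vertex, resolved by noting that the two apexes $z_1,z_2$ are the ends of the crossed edge, hence adjacent in $G$, so one is an $8^+$-vertex feeding at least $\frac{1}{2}$ to $v$ via R5 and R7 --- is precisely the paper's final case with $u_3,u_4$ relabelled. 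The only difference is presentational: the paper runs a linear case analysis ($f_4$, then $f_2$, then $f_3$) where you propose a census over cyclic patterns, but the key ideas and estimates coincide.
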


\begin{proof}
If $v$ is a false vertex, then it is clear that $c'(v)=c(v)=4-4=0$. Hence we assume in the following that $v$ is a true 4-vertex.
By Lemma \ref{n.adj}(d), $v$ is incident with at most three false 3-faces.

If $v$ is not incident with any $4$-special face, then $v$ sends out nothing and thus $c'(v)\geq c(v)=4-4=0$. So, we suppose that $f_1$ is a 4-special face so that $v_1$ is a false vertex. Let $w,u_4$ be vertices such that $vw$ crosses $v_2u_4$ in $G$ at $v_1$. Since $f_1$ is a 4-special face, $v_2u_{4}\in E(G)$ and $d_{\gx}(u_4)\leq 11$. This implies that $u_4\neq v_4$ (for otherwise $vv_4$ is an edge of type $(4,\leq 11)$), and thus $f_4$ is a $4^+$-face. Similarly, if $f_2$ is a 4-special face, then $f_3$ is a $4^+$-face. This implies that $v$ is incident with at most two 4-special faces, to which $v$ sends at most $2\times\frac{1}{6}=\frac{1}{3}$ by R1.

If $f_4$ is a $5^+$-face, or a 4-face incident with at most one false vertex, then by Claim \ref{clm3}, or Claim \ref{new-2}, $f_4$ sends at least $\frac{1}{3}$ to $v$. This implies $c'(v)\geq 4-4-\frac{1}{3}+\frac{1}{3}=0$.
If $f_4$ is a 4-face incident with exactly two false vertices, then $v_4$ is false. We now look at the face $f_2$.

If $f_2$ is not a false 3-face, then it is either a true 3-face, or a 4-face that is incident with at most one false vertex, or a $5^+$-face. In any case, $f_2$ sends
at least $\frac{1}{3}$ to $v$ by Claim \ref{clm2}, or Claim \ref{clm3}, or Claim \ref{new-2}. This concludes that $c'(v)\geq 4-4-\frac{1}{3}+\frac{1}{3}=0$. Hence we are left the case that $f_2$ is a false 3-face, that is, $v_2v_3\in E(\gx)$ and $v_3$ is false.

If $f_3$ is a $5^+$-face, then it sends at least $\frac{1}{3}$ to $v$ by Claim \ref{clm3}, which implies that $c'(v)\geq 4-4-\frac{1}{3}+\frac{1}{3}=0$. If $f_3$ is a 4-face, then let $f_3=\{vv_3u_3v_4\}$. Since $u_3u_4\in E(G)$, either $u_3$ or $u_4$ is a $8^+$-vertex. Without loss of generality, assume that $d_{\gx}(u_3)\geq 8$. Since neither $v_3$ nor $v_4$ is a transitive false vertex on $f_3$, $f_3$ sends at least $\frac{1}{2}$ to $v$ by R5 and R7. Therefore, $c'(v)\geq 4-4-\frac{1}{3}+\frac{1}{2}>0$.
\end{proof}

\begin{prop}
After the application of Rules, the charge of every 5-vertex of $\gx$ is non-negative.
\end{prop}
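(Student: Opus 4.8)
The plan is to treat $v$ as a pure source. Since false vertices have degree $4$, a $5$-vertex $v$ is a true vertex with initial charge $c(v)=5-4=1$, and scanning R1--R8 shows that no rule ever deposits charge on a $5$-vertex (R1, R7 and R8 feed only true $4^{-}$-vertices, R3--R5 feed faces, and the charge R6 pushes through a false vertex lands only on $3$-vertices and on faces). Hence it suffices to bound what $v$ gives away. The only applicable rule is R2, so if $v$ is incident with $a$ $5$-special faces and $b$ non-special $3$-faces, then $v$ sends out exactly $\frac{3a+2b}{10}$, and the whole proposition reduces to the inequality $3a+2b\le 10$.

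The key step is a structural claim mirroring the one used for true $4$-vertices: \emph{every $5$-special face incident with $v$ forces the face of $v$ lying across its false-vertex edge to be a $4^{+}$-face.} To see this, let $f_i$ be a $5$-special face, write its false vertex $z$ as the crossing of $vv'$ with $pp'$ in $G$ (where $p$ is the third vertex of $f_i$), so that $z$ is adjacent in $\gx$ to $v,v',p,p'$ and, by the definition of a $5$-special face, $d_{\gx}(p')\le 9$. The face of $v$ on the other side of the edge $vz$ is bounded by the two arms $vz$ and $zp'$ of $z$, so were it a $3$-face it could only be $\{vzp'\}$, which would require $vp'\in E(\gx)$; but $v$ and $p'$ are both true, so this $\gx$-edge would be an uncrossed edge of $G$ of type $(5,\le 9)$, contradicting the choice of $G$.

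With the claim established I would finish by a short count over the five faces incident with $v$. Every $4^{+}$-face meets $v$ along exactly two edges and hence can be the forced face of at most two $5$-special faces, one across each of those edges; therefore the number $c$ of incident $4^{+}$-faces satisfies $c\ge\lceil a/2\rceil$. Since $a+b+c=5$, substituting $b=5-a-c$ gives $3a+2b=a+10-2c\le a+10-2\lceil a/2\rceil\le 10$, as required, so that $c'(v)\ge 1-\frac{3a+2b}{10}\ge 0$. The two extremal configurations, five non-special $3$-faces ($a=0$) and two $5$-special faces flanking a single $4^{+}$-face ($a=2$, $c=1$), both give $c'(v)=0$.

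The main obstacle is the structural claim, since it is the only place where the crossing geometry and the hypothesis that $G$ contains no $(5,\le 9)$-edge are used; the delicate point is to rule out that the neighbouring face could secretly be a $3$-face, i.e.\ that $vp'$ cannot be realised as an edge of $\gx$. Once this is nailed down exactly as in the true $4$-vertex argument, the remaining counting is routine arithmetic.
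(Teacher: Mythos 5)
Your proof is correct and takes essentially the same route as the paper: the identical key structural claim (each 5-special face at $v$ forces the face across the false-vertex edge $vz$ to be a $4^+$-face, since otherwise $vp'$ would be an uncrossed edge of $G$ of type $(5,\le 9)$), followed by a count of the face types around $v$. Your algebraic bookkeeping $c\ge\lceil a/2\rceil$ and $3a+2b=a+10-2c\le 10$ subsumes the paper's explicit case analysis (no special face; exactly three special faces forcing two $4^+$-faces; at most two special faces), so the two arguments coincide in substance, with only a trivial slip in your parenthetical (R1 sends charge from true 4-vertices to 4-special faces, not to vertices) that does not affect the conclusion that 5-vertices receive nothing.
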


\begin{proof}
If a $5$-vertex $v$ is not incident with any 5-special face, then $c'(v)\geq 5-4-5\times \frac{1}{5}=0$ by R2. Hence we assume that $v$ is incident with at least one 5-special face.

Suppose that $f_1$ is a special 5-face so that $v_1$ is a false vertex. Let $w,u_5$ be vertices such that $vw$ crosses $v_2u_5$ in $G$ at $v_1$. Since $f_1$ is a 5-special face, $wv_2\in E(G)$ and $d_{\gx}(u_5)\leq 9$. This implies that $u_5\neq v_5$ (for otherwise $vv_5$ is an edge of type $(5,\leq 9)$), and thus $f_5$ is a $4^+$-face. This fact tells us that if $v$ is incident with a 5-special face, then it must be incident with one $4^+$-face. Hence $v$ is incident with at most four 3-faces, among which at most three are 5-special.

If $v$ is incident with three 5-special faces, then it is incident with two $4^+$-faces, and thus $c'(v)\geq 5-4-3\times\frac{3}{10}>0$ by R2. If $v$ is incident with at most two 5-special, then $c'(v)\geq 5-4-2\times\frac{3}{10}-2\times\frac{1}{5}=0$ by R2.
\end{proof}

\begin{prop}
After the application of Rules, the charge of every 6-vertex of $\gx$ is non-negative.
\end{prop}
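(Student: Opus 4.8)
The plan is to mimic the treatment of the $5$-vertex in the previous proposition. Let $v$ be a true $6$-vertex, with initial charge $c(v)=6-4=2$. Inspecting R1--R8, a $6$-vertex never receives charge: it appears only as a sender in R3, which transfers $\frac{7}{18}$ to each incident $6$-special face and $\frac{1}{3}$ to each incident $3$-face that is not $6$-special. Since both kinds of recipients are $3$-faces, I would write $c'(v)=2-\frac{7}{18}s-\frac{1}{3}t$, where $s$ and $t$ denote, respectively, the numbers of incident $6$-special faces and of incident non-$6$-special $3$-faces of $v$. Thus it suffices to prove $7s+6t\le 36$.

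The key structural step is to show that every incident $6$-special face forces one of its two neighbouring faces at $v$ to be a $4^+$-face. Suppose $f_i=\{vv_iv_{i+1}\}$ is $6$-special and its false vertex is $v_i$, say arising from $vw$ crossing $v_{i+1}u$ in $G$ for suitable $w,u$; the defining inequality for $6$-special then gives $d_{\gx}(u)\le 8$. Reading off the rotation of the four edge-segments around the false vertex $v_i$, the face $f_{i-1}=\{vv_{i-1}v_i\}$ lies on the far side of the edge $vv_i$. If $f_{i-1}$ were a $3$-face, its third corner would have to be $u$, forcing $vu\in E(G)$; but then $vu$ would be an edge of type $(6,\le 8)$, contradicting the choice of $G$. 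Hence $f_{i-1}$ is a $4^+$-face. Symmetrically, if the false vertex is $v_{i+1}$, then $f_{i+1}$ is a $4^+$-face. So each $6$-special face forces the neighbouring face beyond its false vertex to have degree at least $4$.

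To finish, let $q$ be the number of incident $4^+$-faces, so that $s+t+q=6$. Each $4^+$-face, having exactly two neighbouring faces in the cyclic sequence $f_1,\dots,f_6$, can be the face forced by at most two $6$-special faces; counting the forcing incidences gives $s\le 2q$, that is $q\ge\lceil s/2\rceil$. Therefore $t=6-s-q\le 6-s-\lceil s/2\rceil$, and a short case check on $s\in\{0,1,2,3,4\}$ (the values $s\ge 5$ being impossible, since then $s+q>6$) yields $7s+6t\le 36$ in every case; for instance $s=3$ forces $q\ge 2$ and $t\le 1$, so $v$ sends out at most $\frac{7}{18}\cdot 3+\frac{1}{3}\cdot 1=\frac{3}{2}\le 2$. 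Consequently $c'(v)\ge 0$.

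I expect the forcing claim of the second paragraph to be the only delicate point: it is where the excluded edge type $(6,\le 8)$ is actually used, and it requires reading the cyclic order of the four edge-segments around the false vertex correctly, exactly as in the $u_5\ne v_5$ argument for the $5$-vertex. Once that claim is in place, the remaining counting is entirely routine.
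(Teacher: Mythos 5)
Your proof is correct and follows essentially the same route as the paper's: both hinge on the observation that a $6$-special face forces the face beyond its false vertex to be a $4^+$-face (because otherwise $vu$ would be an edge of type $(6,\leq 8)$, excluded in the minimal counterexample), and then finish by counting incident face types. Your bookkeeping via $s\leq 2q$ is in fact slightly more systematic than the paper's case split (which only distinguishes ``four special'' from ``at most three special''), but it is the same decomposition and the same key lemma.
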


\begin{proof}
The proof is highly similar to the previous one. For the completeness of the paper, we add this proof here.

If a $6$-vertex $v$ is not incident with any 6-special face, then $c'(v)\geq 6-4-6\times \frac{1}{3}=0$. Hence we assume that $v$ is incident with at least one 6-special face.

Suppose that $f_1$ is a special 6-face so that $v_1$ is a false vertex. Let $w,u_6$ be vertices such that $vw$ crosses $v_2u_6$ in $G$ at $v_1$. Since $f_1$ is a 6-special face, $v_2u_{6}\in E(G)$ and $d_{\gx}(u_6)\leq 8$. This implies that $u_6\neq v_6$ (for otherwise $vv_6$ is an edge of type $(6,\leq 8)$), and thus $f_6$ is a $4^+$-face. This fact tells us that if $v$ is incident with a 6-special face, then it must be incident with one $4^+$-face. Hence $v$ is incident with at most five 3-faces, among which at most four are 6-special.

If $v$ is incident with four 6-special faces, then it is incident with two $4^+$-faces, and thus $c'(v)\geq 6-4-4\times\frac{7}{18}>0$ by R3. If $v$ is incident with at most three 6-special, then $c'(v)\geq 6-4-3\times\frac{7}{18}-2\times\frac{1}{3}>0$ by R3.
\end{proof}

\begin{prop}
After the application of Rules, the charge of every $7^+$-vertex of $\gx$ is non-negative.
\end{prop}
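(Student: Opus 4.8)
The plan is to observe first that every $7^{+}$-vertex $v$ is necessarily a true vertex, since every false vertex has degree $4$ in $\gx$, and that $v$ receives no charge under any rule: R6--R8 deliver charge only to $3$-vertices, to true $4^{-}$-vertices, or to faces through false vertices, none of which is a $7^{+}$-vertex. Hence $c'(v)=d_{\gx}(v)-4-(\text{total charge sent out by }v)$, so it suffices to bound what $v$ sends out. I would then split into the two cases $d_{\gx}(v)\geq 8$ and $d_{\gx}(v)=7$.

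For $d_{\gx}(v)\geq 8$ only R5 applies: $v$ sends $\frac{d_{\gx}(v)-4}{d_{\gx}(v)}$ to each of its $d_{\gx}(v)$ incident faces, so the total sent is exactly $d_{\gx}(v)\cdot\frac{d_{\gx}(v)-4}{d_{\gx}(v)}=d_{\gx}(v)-4=c(v)$, whence $c'(v)=0$. This case is immediate.

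The substantive case is $d_{\gx}(v)=7$, where $c(v)=3$ and only R4 applies, charging $\frac{1}{2}$ to each incident false $3$-face. The heart of the argument---and the step I expect to be the main obstacle---is to show that a $7$-vertex is incident with \emph{at most six} false $3$-faces, for then $c'(v)\geq 3-6\cdot\frac{1}{2}=0$. To prove this, write $v_1,\dots,v_7$ for the neighbours of $v$ in cyclic order and $f_i=\{vv_iv_{i+1}\}$ (indices modulo $7$) for the incident faces. Since false vertices are pairwise non-adjacent by Lemma \ref{n.adj}(a), no $3$-face can carry two false vertices (its two non-$v$ vertices are adjacent in $\gx$); thus a false $3$-face $f_i$ contains exactly one false vertex, and as $v$ is true this vertex is $v_i$ or $v_{i+1}$. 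If all seven $f_i$ were false $3$-faces, then for every $i$ exactly one of $v_i,v_{i+1}$ would be false, forcing the true/false labels of $v_1,\dots,v_7$ to alternate strictly around the odd cycle $C_7$---an impossible proper $2$-colouring. Therefore at most six of the $f_i$ are false $3$-faces, which is precisely the bound needed.

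Combining the two cases gives $c'(v)\geq 0$ for every $7^{+}$-vertex, completing the proof. The only delicate point is this counting step for the $7$-vertex, where the extremal value of six false $3$-faces exactly meets the balance $3-6\cdot\frac{1}{2}=0$; the $8^{+}$ case is a one-line charge computation.
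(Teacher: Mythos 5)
Your proof is correct and takes essentially the same approach as the paper: for $8^{+}$-vertices the charge sent under R5 exactly equals the initial charge, and for a $7$-vertex the key point is that at most six incident faces can be false $3$-faces, which follows from Lemma~\ref{n.adj}(a). Your odd-cycle parity argument is just an expanded version of the paper's terse justification that seven false $3$-faces around $v$ would force two false vertices to be adjacent in $\gx$.
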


\begin{proof}
If $v$ is a 7-vertex, then $v$ is incident with at most six false 3-faces, for otherwise two false vertices are adjacent in $\gx$. Hence we have, by R4, that $c'(v)\geq 7-4-6\times\frac{1}{2}=0$. If $v$ is a $8^+$-vertex, then $c'(v)\geq d_{\gx}(v)-4-\frac{d_{\gx}(v)-4}{d_{\gx}(v)}\cdot d_{\gx}(v)=0$ by R5.
\end{proof}

This is the end of the whole proof.
\end{proof}


\bibliographystyle{srtnumbered}
\bibliography{mybib}

\end{document}